\documentclass[12pt]{amsart}

\usepackage{amsfonts}
\usepackage{amssymb}
\usepackage{amscd}
\usepackage{graphicx}
\usepackage{hyperref}

\newtheorem{theorem}{Theorem}[section]

\newtheorem{lemma}[theorem]{Lemma}

\newtheorem{corollary}[theorem]{Corollary}
\newtheorem{conjecture}[theorem]{Conjecture}

\theoremstyle{remark} 
\newtheorem{definition}[theorem]{Definition}
\newtheorem{example}[theorem]{Example}
\newtheorem{remark}[theorem]{Remark} 

\newcommand{\kerr}{\mbox{Ker } }
\newcommand{\cokerr}{\mbox{Coker } }

\newcommand{\s}{\mathfrak{s}}

\hoffset=-15mm
\textwidth=16cm
\textheight=217mm

\input xy
\xyoption{all}

\begin{document}

\title{Heegaard Floer groups of Dehn surgeries}
\author{Stanislav Jabuka}

\address{Stanislav Jabuka, Department of Mathematics and Statistics MS 0084, University of Nevada, Reno, NV 89557}

\email{jabuka@unr.edu}

\subjclass{Primary 57M25, 57M27; Secondary 57R58}

\keywords{Heegaard Floer, Dehn surgery, knots, Cabling Conjecture.}

\thanks{This work was partially supported by grant \#246123 from the Simons Foundation.}

\begin{abstract}
We use an algorithm by Ozsv\'ath and Szab\'o to find closed formulae for the ranks of the hat version of the Heegaard Floer homology groups for non-zero Dehn surgeries on knots in $S^3$. As applications we provide new bounds on the number of distinct ranks of the Heegaard Floer groups a Dehn surgery can have. These in turn give a new lower bound on the rational Dehn surgery genus of a rational homology 3-sphere. We also provide novel obstructions for a knot to be a potential counterexample to the Cabling Conjecture. 
\end{abstract}
\maketitle

%%%%%%%%%%
%%%%%%%%%%
%%%%%%%%%%
%%%%%%%%%%
%%%%%%%%%%
\section{Introduction}
%%%
%%%
\subsection{Background and results} In their article \cite{Peter21}, Ozsv\'ath and Szab\'o derive an algorithm for computing the Heegaard Floer groups $\widehat{HF}(S^3_{p/q}(K),\s)$ of the 3-manifold $S^3_{p/q}(K)$ obtained as $p/q$-framed Dehn surgery on the knot $K\subset S^3$, with $\s$ a spin$^c$-structure on $S^3_{p/q}(K)$. The presented algorithm takes as input complexes $\hat A_s, \, \hat B_s$, $s\in \mathbb Z$, derived from the knot chain complex $CFK^\infty (K)$, along with chain maps $\hat v_s, \hat h_s :\hat A_s\to \hat B_s$. These complexes and maps are organized into a \lq\lq master chain complex\rq\rq \, % (a mapping cone complex associated to a chain map between complexes) 
whose homology is the group  $\widehat{HF}(S^3_{p/q}(K),\s)$. 

Ozs\'ath and Szab\'o's  algorithm is not hard to implement for a concrete knot $K\subset S^3$, and readily leads to computations of $\widehat{HF}(S^3_{p/q}(K),\s)$. However, in the interest of broader applications, we focus in this work on evaluating their algorithm in general, without specifying a concrete knot. The result are closed formulae for the ranks of $\widehat{HF}(S^3_{p/q}(K),\s)$, expressed in terms of data coming from the same complexes $\hat A_s, \hat B_s$ and maps $\hat v_s, \, \hat h_s$ alluded to above, but without the need for further computations.  A result akin to the ones presented here is obtained in \cite{Peter21} for the {\em total Heegaard Floer group}
$$\widehat{HF}(S^3_{p/q}(K)) := \oplus _{\s \in Spin^c(S^3_{p/q})} \widehat{HF}(S^3_{p/q}(K),\s).$$
The result is expressed in terms of the knot invariant $\nu:=\nu(K)$ defined as 
\begin{equation} \label{DefinitionOfNu}
\nu(K) = \min\{ s \in \mathbb Z \, |\, \hat v_s:\hat A_s \to \widehat{CF}(S^3) \mbox{ induces a nontrivial map in homology.} \}.
\end{equation}
It is then proved in \cite{Peter21} (Proposition 9.6), under the assumptions that $\gcd (p,q)=1$, $p\ne 0$, $q>0$ and $\nu(K)\ge \nu(-K)$ (where $-K$ is the mirror of the knot $K$, with reversed orientation), that  
\begin{equation} \label{TotalRankOfHF}
\mbox{rk} \, \widehat{HF}(S^3_{p/q}(K)) = \left\{
\begin{array}{cl}
p + 2\max (0, (2\nu -1)q-p) +q \displaystyle \sum _{s\in \mathbb Z} (\mbox{rk } H_*(\hat A_s)-1) & ; \nu >0 \mbox{ or } p>0, \cr
& \cr
|p|+ q \displaystyle  \sum _{s\in \mathbb Z} (\mbox{rk } H_*(\hat A_s)-1) & ; \nu =0 \mbox{ \& } p<0.
\end{array}
\right. 
\end{equation}
To state our results we first introduce some additional notation. For a fixed pair of relatively prime, non-zero integers $p,q\in \mathbb Z$, we define the function $\varphi_{p,q,i} :\mathbb Z\to \mathbb Z$, or $\varphi _i$ for short, as
%%%
%%%
\begin{equation}  \label{CombinatorialInput1}
\varphi _i(s) = \text{Cardinality of the set }\textstyle \left\{ n\in \mathbb Z \, \big|\,  \left\lfloor \frac{i+p\cdot n}{q} \right\rfloor = s \right\}. 
\end{equation}
%%%
%%%
Note that $\varphi _i(s)$ depends on $i$ only through its modulus $[i]$ with respect to $p$. When we wish to emphasize this point we shall write $\varphi _{[i]}(s)$ instead of $\varphi _i(s)$. Note also that for all $s\in \mathbb Z$ the following relation holds
\begin{equation} \label{SummationOverI}
\sum _{[i] \in \mathbb Z/p\mathbb Z} \varphi _{[i]}(s) = |q|.
\end{equation}
It was observed in \cite{Peter21} that $\nu(K) = \tau(K)$ or $\nu (K)=\tau(K)+1$, where $\tau(K)$ is the Ozsv\'ath-Szab\'o concordance invariant from \cite{Peter11}. Since $\tau(-K) = -\tau(K)$ we can always assume that $\nu(K)\ge 0$ by passing to the mirror of $K$ if necessary. This does not pose any restriction to the next theorem, as $S^3_{p/q}(-K) \cong - S^3_{-p/q}(K)$ (as oriented manifolds) and as $\mbox{rk } \widehat{HF}(-Y,\s) = \mbox{rk } \widehat{HF}(Y,\s)$ for any oriented, closed 3-manifold $Y$ and any choice of $\s\in Spin^c(Y)$. Going forward we shall rely on the following notational shortcut:
\begin{equation} \label{Ssubi}
\begin{array}{rl}
\mathcal S_{[i]} & = \displaystyle \sum _{s\in \mathbb Z} \varphi _{[i]}(s) \left( \text{rk } H_*(\hat A_s) - 1\right). % \cr
%& = \displaystyle  \sum _{s= 0}^{g-1} \left( \sum _{|t|\le s}\varphi _{[i]}(t)\right) [\text{rk }  H_*(\hat A_{s}) -\text{rk } H_*(\hat A_{s+1})  ] .
\end{array}
\end{equation}
%
%The equality of the two right-hand sides above is easily demonstrated by relying on the facts that $\text{rk }H_*(\hat A_{-s}) = \text{rk }H_*(\hat A_s)$ for each $s\in \mathbb Z$,  and that $\text{rk } H_*(\hat A_s) = 1$ whenever $|s|\ge g$, where $g$ is the Seifert genus of $K$ \cite{Peter7}. In particular, each $\mathcal S_{[i]}$ is computed by evaluating a finite sum. 
We note that $\text{rk } H_*(\hat A_s) = 1$ whenever $|s|\ge g$, with $g$ the Seifert genus of $K$ \cite{Peter7}. In particular $\mathcal S_{[i]}$ is computed by evaluating a finite sum. 

\begin{theorem} \label{main}
Let $p,q$ be relatively prime, non-zero integers with $q>0$, and let $K$ be a knot in $S^3$ with $\nu = \nu (K) \ge 0$ and with Seifert genus $g$. Then there is an identification of $Spin^c(S^3_{p/q}(K))$ with $\mathbb Z/p\mathbb Z$, such that the ranks of $\widehat{HF}(S^3_{p/q}(K),[i])$ are given as indicated in the two cases below, with $\varphi_{[i]}(s)$ and  $S_{[i]}$ as in \eqref{CombinatorialInput1} and \eqref{Ssubi} respectively. 
%%%
%%% 
\begin{itemize}
\item[(i)] If $\nu>0$ then
%%%%
$$
rk\,  \widehat{HF}(S^3_{p/q}(K),[i]) = 
\left\{
\begin{array}{rl} 
1+ \mathcal S_{[i]} & \quad ; \quad 0<(2\nu-1)q\le p, \cr
%%%
-1 +2 \sum_{|s|<\nu} \varphi_{[i]}(s) +  \mathcal S_{[i]} &  \quad ; \quad 0<p\le (2\nu-1)q, \cr
%%%
1+2 \sum _{|s|<\nu} \varphi _{[i]}(s)+ \mathcal S_{[i]}  &  \quad ; \quad 0>p.
\end{array} 
\right.
$$
\item[(ii)] If $\nu=0$ then %and  $rk(\hat h_0+\hat v_0) =1$, then
%%%%
%%%%
$$
rk\,  \widehat{HF}(S^3_{p/q}(K),[i]) = 
\left\{
\begin{array}{rl}
-1 +  \mathcal S_{[i]}  & \quad ; \quad \begin{array}[t]{l} p< 0, \text{rk }(\hat v_0+\hat h_0)=2, \text{ and }\cr
\lfloor \frac{i+ps}{q}\rfloor = 0 \text{ for some } s\in \mathbb Z, \end{array}\cr 
1 +  \mathcal S_{[i]}  & \quad ; \quad\text{otherwise. }
\end{array}
\right.
$$
%%%
%%%
\end{itemize}
\end{theorem}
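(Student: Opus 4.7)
My plan is to evaluate Ozsváth–Szabó's mapping cone algorithm \cite{Peter21} spin$^c$-summand by spin$^c$-summand. Their algorithm presents $\widehat{HF}(S^3_{p/q}(K))$ as the homology of a mapping cone $\mathbb{X}_{p,q} = \mathrm{cone}(D: \mathbb{A}\to\mathbb{B})$, where $\mathbb{A}$ and $\mathbb{B}$ are direct sums of copies of $\hat A_s$ and $\hat B_s$ indexed by $n\in\mathbb{Z}$, and $D$ is assembled from $\hat v_s$ and $\hat h_s$. This complex splits as $\bigoplus_{[i]\in\mathbb{Z}/p\mathbb{Z}}\mathbb{X}_{[i]}$ along the spin$^c$-decomposition, and a direct inspection of the indexing identifies $\varphi_{[i]}(s)$ from \eqref{CombinatorialInput1} as precisely the multiplicity with which $\hat A_s$ (and $\hat B_s$) appears in $\mathbb{X}_{[i]}$. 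The identity \eqref{SummationOverI} then reflects that summing $\varphi_{[i]}(s)$ over $[i]$ recovers the total count $q$ appearing in \eqref{TotalRankOfHF}.

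From the mapping cone long exact sequence,
$$\mathrm{rk}\,\widehat{HF}(S^3_{p/q}(K),[i]) = \mathrm{rk}\,H_*(\mathbb{A}_{[i]}) + \mathrm{rk}\,H_*(\mathbb{B}_{[i]}) - 2\,\mathrm{rk}\,D_{[i],*}.$$
Since each $\hat B_s \cong \widehat{CF}(S^3)$ has rank-one homology and each $\hat A_s$ has $H_*(\hat A_s) = \mathbb{F}$ for $|s|\ge g$, after canceling the acyclic tails of the cone the $\mathbb{A}$- and $\mathbb{B}$-side contributions together account for the $\mathcal S_{[i]}$ summand in the final formulas. The central computation is therefore the rank of $D_{[i],*}$, which is controlled by the defining property of $\nu$: $\hat v_{s*}$ is nontrivial exactly for $s\ge\nu$, and (after arranging $\nu(K)\ge\nu(-K)$ by mirroring) $\hat h_{s*}$ is nontrivial exactly for $s\le-\nu$. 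Consequently $D_{[i],*}$ is a near-bijection on the arithmetic progression $\{\lfloor(i+pn)/q\rfloor\}_n$, except that each copy of $\hat A_s$ with $|s|<\nu$ drops its rank by one; summed across the progression this produces the $2\sum_{|s|<\nu}\varphi_{[i]}(s)$ correction.

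For part (i), the three sub-cases are distinguished by the sign of $p$ and by the inequality between $(2\nu-1)q$ and $p$: the inequality decides whether the cone's boundary behavior absorbs the defects entirely (first sub-case, $1+\mathcal S_{[i]}$) or leaves them intact (second sub-case, $-1+2\sum_{|s|<\nu}\varphi_{[i]}(s)+\mathcal S_{[i]}$), while the sign of $p$ reverses the orientation of the cone and shifts the constant from $-1$ back to $+1$ (third sub-case). Part (ii) is similar but simpler: when $\nu=0$ the only possible rank drop in $D_{[i],*}$ occurs at $s=0$, and it manifests precisely when $\hat v_{0*}$ and $\hat h_{0*}$ are both homologically nontrivial (i.e.\ $\mathrm{rk}(\hat v_0+\hat h_0)=2$), when $p<0$ (so the cone's orientation exposes this defect), and when $\varphi_{[i]}(0)>0$ (so $[i]$ actually visits $s=0$). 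The main technical challenge is the linear-algebraic bookkeeping of how the sign of $p$, the structure of the progression $\{i+pn\}$, and the per-$\hat A_s$ defects combine to produce the precise $\pm 1$ constants in the formulas; I would use the consistency check that summing the per-spin$^c$ ranks via \eqref{SummationOverI} must recover \eqref{TotalRankOfHF} as a guide throughout.
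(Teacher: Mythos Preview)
Your approach is essentially the same as the paper's: both reduce to the mapping cone long exact sequence and then carry out a case analysis governed by the sign of $p$ and the inequality $(2\nu-1)q \lessgtr p$, using Remark \ref{RemarkAboutNuAndVsAndHs} to decide where $\hat v_{s,*}$ and $\hat h_{s,*}$ vanish. Your identity $\mathrm{rk}\,H_*(\mathbb X_{[i]}) = \mathrm{rk}\,H_*(\mathbb A_{[i]}) + \mathrm{rk}\,H_*(\mathbb B_{[i]}) - 2\,\mathrm{rk}\,D_{[i],*}$ is just the paper's $\mathrm{rk}\,\ker + \mathrm{rk}\,\mathrm{coker}$ rewritten, and your reading of $\varphi_{[i]}(s)$ as the multiplicity of $\hat A_s$ in the $[i]$-summand is exactly how the paper passes from sums over $s\in\mathbb Z$ to sums over $t$ weighted by $\varphi_{[i]}(t)$.

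Where your outline stays heuristic, the paper is concrete: in each case it introduces explicit transition indices $s_0$ (and $s_1$) marking where $\hat v_*$ or $\hat h_*$ first becomes nontrivial, and then reads off $\mathrm{rk}\,\ker$ and $\mathrm{rk}\,\mathrm{coker}$ directly from the resulting zig-zag picture. In particular, the $\pm 1$ constants you attribute to ``boundary behavior absorbing the defects'' or ``reversing the orientation of the cone'' arise in the paper from an exact count: in Case~2 the cokernel has rank $s_1-s_0-2$, while in Case~3 it has rank $s_0-s_1$, and the identity $|s_1-s_0|-1 = \sum_{|t|<\nu}\varphi_{[i]}(t)$ converts both into the stated formulas. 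Your plan would benefit from making these transition indices explicit rather than relying on the consistency check against \eqref{TotalRankOfHF}; the paper's Case~1 also splits into two subcases (depending on whether any index lands strictly between $-\nu$ and $\nu$), a subtlety your sketch does not yet distinguish.
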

It is observed in \cite{Peter21} that the case of a knot $K$ with $\nu(K)=0$ and $\text{rk }(\hat v_{0,*} + \hat h_{0,*})=2$ can be avoided by passing to its mirror knot $-K$. In situations where $-p/q$-surgery on $-K$ carries the same information one wishes to read off from the $p/q$-surgery on $K$, the first case in (ii) in Theorem \ref{main} is superfluous.
%%%
%%%

Equation \eqref{TotalRankOfHF} is found to be an easy consequence of Theorem \ref{main} and equation \eqref{SummationOverI}. For example, if $-p, \nu>0$ then 
\begin{align*} 
\sum _{[i]\in \mathbb Z/p\mathbb Z} \left(  1+2\sum _{|s|<\nu} \varphi_{[i]}(s)+ \mathcal S_{[i]} \right) & =  -p + 2 \sum _{|s|<\nu}\sum _{[i]\in \mathbb Z/p\mathbb Z} \varphi_{[i]}(s) + \sum _{s\in \mathbb Z} \sum _{[i]\in \mathbb Z/p\mathbb Z} \varphi_{[i]}(s) (\mbox{rk } H_*(\hat A_s)-1), \cr
%%%
& = -p + 2 \sum _{|s|<\nu} q + q \sum _{s\in \mathbb Z}(\mbox{rk } H_*(\hat A_s)-1), \cr
%%%
& = -p + 2q(2\nu-1) + q \sum _{s\in \mathbb Z}(\mbox{rk } H_*(\hat A_s)-1), \cr
%%%
& = p+2\max(0,(2\nu-1)q-p)+ q \sum _{s\in \mathbb Z}(\mbox{rk } H_*(\hat A_s)-1).
\end{align*}
%%%
%%%
The results of Theorem \ref{main} become particularly simple for integral sugeries, for which 
$$\varphi_{p,1,[i]}(s) = \left\{
\begin{array}{cl}
1 & \quad ; \quad s\equiv i \,(\text{mod } p), \cr
0 & \quad ; \quad s\not\equiv i \,(\text{mod } p).
\end{array}
\right.
$$
\begin{corollary} \label{CorollaryAboutIntegerSurgeries}
Assume the hypotheses of Theorem \ref{main} and pick $q=1$. Let $\mathcal I_{[i]}$ be the (possibly empty) set of integers $\mathcal I_{[i]} = \{ s\in \mathbb Z\,|\,  |s|<g \text{ and } s\equiv i \,(\text{mod } p)\}$, and write $2\nu-1 = pn+r$ with $n$ an integer and with $r\in \{0,\dots,|p|-1\}$. If $\nu>0$ define the set of indices $\mathcal J\subset \mathbb Z/p\mathbb Z$ as $\mathcal J = \{[-\nu+j]\, |\, j=1,\dots, r\}$ (if $r=0$ then $\mathcal J=\emptyset$). Then the ranks of $\widehat{HF}(S^3_p(K),[i])$ are given by 
\begin{itemize}
\item[(i)] If $\nu >0$ then 
$$
rk\,  \widehat{HF}(S^3_{p/q}(K),[i]) = 
\left\{
\begin{array}{rl} 
1+ \sum_{s\in \mathcal I_{[i]}} (\text{rk }H_*(\hat A_s) -1) & \quad ; \quad 0<(2\nu-1)q\le p, \cr &\cr
%%%
2n+1+ \sum_{s\in \mathcal I_{[i]}} (\text{rk }H_*(\hat A_s) -1) &  \quad ; \quad 0<p\le (2\nu-1)q, [i]\in \mathcal J\cr
2n-1+ \sum_{s\in \mathcal I_{[i]}} (\text{rk }H_*(\hat A_s) -1) &  \quad ; \quad 0<p\le (2\nu-1)q, [i]\notin \mathcal J\cr & \cr
%%%
2n+3+ \sum_{s\in \mathcal I_{[i]}} (\text{rk }H_*(\hat A_s) -1)  &  \quad ; \quad 0>p, [i]\in \mathcal J, \cr
2n+1+ \sum_{s\in \mathcal I_{[i]}} (\text{rk }H_*(\hat A_s) -1)  &  \quad ; \quad 0>p, [i]\notin \mathcal J.
\end{array} 
\right.
$$
\item[(ii)] If $\nu=0$ then %and  $rk(\hat h_0+\hat v_0) =1$, then
%%%%
%%%%
$$
rk\,  \widehat{HF}(S^3_{p/q}(K),[i]) = 
\left\{
\begin{array}{rl}
-1 + \sum_{s\in \mathcal I_{[i]}} (\text{rk }H_*(\hat A_s) -1) & \quad ; \quad \begin{array}[t]{l} p< 0, \text{rk }(\hat v_0+\hat h_0)=2, \text{ and }\cr
\lfloor \frac{i+ps}{q}\rfloor = 0 \text{ for some } s\in \mathbb Z, \end{array}\cr 
1 + \sum_{s\in \mathcal I_{[i]}} (\text{rk }H_*(\hat A_s) -1)  & \quad ; \quad\text{otherwise. }
\end{array}
\right.
$$
\end{itemize}
\end{corollary}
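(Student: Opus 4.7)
The plan is to derive the corollary as a direct specialization of Theorem~\ref{main} at $q=1$; what makes the statement look more elaborate than Theorem~\ref{main} itself is that the combinatorial function $\varphi_{[i]}$ and the partial sums appearing there admit simple, closed forms when $q=1$, and the corollary simply records the result of writing those forms out case-by-case. My first step is to evaluate $\varphi_{p,1,[i]}(s)$. With $q=1$ the defining relation $\lfloor (i+p n)/q\rfloor = s$ becomes the linear equation $i+pn = s$, which has a (unique) integer solution $n$ precisely when $s\equiv i\pmod p$. Thus $\varphi_{p,1,[i]}(s)$ is the indicator of the residue class $[i]$ modulo $p$, and the quantity $\mathcal S_{[i]}$ of \eqref{Ssubi} collapses to $\sum_{s\equiv i\,(\mathrm{mod}\,p)}(\text{rk }H_*(\hat A_s)-1)$. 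Because $\text{rk }H_*(\hat A_s)=1$ whenever $|s|\geq g$, only the indices $s\in\mathcal I_{[i]}$ contribute, producing the common tail $\sum_{s\in\mathcal I_{[i]}}(\text{rk }H_*(\hat A_s)-1)$ that appears as an addend in every formula of the corollary.

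The second step is to evaluate the partial sum $\sum_{|s|<\nu}\varphi_{[i]}(s)$, which with $q=1$ counts the integers of the interval $\{-\nu+1,-\nu+2,\ldots,\nu-1\}$ lying in the residue class $[i]$ modulo $|p|$. This interval has $2\nu-1$ elements, so writing $2\nu-1=|p|n+r$ with $r\in\{0,\ldots,|p|-1\}$, the first $|p|n$ consecutive integers of the interval distribute evenly ($n$ per residue class) and the leftover $r$ integers are $-\nu+|p|n+1,\ldots,-\nu+|p|n+r$, whose residues are exactly $[-\nu+1],\ldots,[-\nu+r]$, i.e. the set $\mathcal J$. Consequently $\sum_{|s|<\nu}\varphi_{[i]}(s)=n+1$ when $[i]\in\mathcal J$ and $n$ otherwise.

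Once these two elementary identities are in hand, the corollary follows by a direct substitution into the subcases of Theorem~\ref{main}. For instance, in part~(i) the subcase $0<p\leq 2\nu-1$ gives $-1+2(n+1)+\mathcal S_{[i]}=2n+1+\mathcal S_{[i]}$ when $[i]\in\mathcal J$ and $-1+2n+\mathcal S_{[i]}=2n-1+\mathcal S_{[i]}$ otherwise, and the case $p<0$ contributes an additional $+2$ everywhere, producing $2n+3$ and $2n+1$; part~(ii) requires no residue counting at all and only substitutes the simplified $\mathcal S_{[i]}$. The only real obstacle is notational: one must be careful with the sign of $p$ in the residue-counting step, since $\varphi$ and the congruence $s\equiv i\pmod p$ depend only on $|p|$, whereas the decomposition $2\nu-1=pn+r$ should be interpreted consistently (using $|p|$) so that the block $\mathcal J$ of ``heavy'' residues is identified correctly.
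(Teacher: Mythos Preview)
Your argument is correct and follows exactly the route the paper intends: the corollary is stated immediately after the explicit formula for $\varphi_{p,1,[i]}(s)$ and is meant to be read as a direct substitution into Theorem~\ref{main}, which is precisely what you do. Your residue-counting computation of $\sum_{|s|<\nu}\varphi_{[i]}(s)$ is the same one that later appears (in greater generality) as Lemma~\ref{LemmaAux1}, and your closing remark about interpreting the decomposition $2\nu-1=pn+r$ via $|p|$ when $p<0$ is well taken---without it the formulas in the $p<0$ branch would not match Theorem~\ref{main}.
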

%%%%%%%%%%%%%%%%%%%%%%%%%%%%%%%%%%%%%%%%%%%%%%%%%%%%%%%%%%%%%%%%%%%%%%%%%%%%%%%%%%%%%%%%%%%%%%%%%%%%%%%%%%%%%%%%%%%%%%%%%%%%%%%%%%%%%%%%%%%%%%%%%%%%%%%%%%%%%%%%%%%%%%%%%%%%%%%%%%%%%%%%%%%%%%%%%%%%%%%%%%%%%%%%%%%%%%%%%%%%
Next we turn to applications of Theorem \ref{main} and Corollary \ref{CorollaryAboutIntegerSurgeries}. The first application concerns the Dehn surgery genera of rational homology 3-spheres introduced in \cite{jabuka1}, the second gives new obstructions for a knot to possibly provide a counterexample to the Cabling Conjecture \cite{AcunaGonzalesShort}. 
%%%
%%%
\subsection{Dehn surgery genera of rational homology 3-spheres} 
%%%
%%%
In \cite{jabuka1} we defined the {\em integral and rational Dehn surgery genera $g_\mathbb Z(Y)$ and $g_\mathbb Q(Y)$} of a rational homology $3$-sphere $Y$ as 
$$\begin{array}{l}
g_\mathbb Z(Y) = \left\{
\begin{array}{cl}
\min\{ g(K) \, |\, Y=S^3_r(K), r\in \mathbb Z\} & ; \quad \text{If $Y=S^3_r(K)$ for some $K$,} \cr  
\infty & ; \quad \text{Otherwise.}
\end{array}
\right. \cr \cr
%%%
%%%
g_\mathbb Q(Y) = \left\{
\begin{array}{cl}
\min\{ g(K) \, |\, Y=S^3_r(K), r\in \mathbb Q\} & ; \quad \text{If $Y=S^3_r(K)$ for some $K$,} \cr  
\infty & ; \quad \text{Otherwise.}
\end{array}
\right.
\end{array}
$$
Both types of genera should be viewed as measures of complexity for rational homology 3-spheres.  Note that $g_\mathbb Z(Y) \ge g_ \mathbb Q(Y)$ and examples are provided in \cite{jabuka1} showing that $g_\mathbb Z(Y) - g_\mathbb Q(Y)$ can become arbitrarily large (while staying finite). One of the results from \cite{jabuka1} is the lower bound 
\begin{equation} \label{IntegralDehnSurgeryGenusBound}
2g_\mathbb Z(Y) - 1\ge |H_1(Y;\mathbb Z)| - \ell,  
\end{equation}
where $\ell$ is the number of $L$-structures on $Y$ (an $L$-structure on $Y$ is a spin$^c$-structure $\s$ on $Y$ with $\widehat{HF}(Y,\s) \cong \mathbb Z$). This bound is sharp for many examples and can be used to determine the integral Dehn surgery genus for some 3-manifolds. Techniques from \cite{jabuka1} do not suffice to provide an analogous lower bound on $g_\mathbb Q(Y)$. This missing bound can now be established courtesy of Theorem \ref{main}. 
%%%
%%%
\begin{theorem} \label{Estimate}
Let $Y$ be the result of $p/q$-framed surgery on a genus $g$ knot $K$ in $S^3$. Then the cardinality of the set 
\begin{equation}\label{DefinitionOfTheSetMathcalR}
\mathcal {R} = \{ \mbox{rk } \widehat{HF}(Y,\s)\, |\, \s\in Spin^c(Y)\},
\end{equation}
does not exceed $g+1$. 
\end{theorem}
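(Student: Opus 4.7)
My plan is to exhibit the rank function $[i]\mapsto\mbox{rk }\widehat{HF}(Y,[i])$ as a step function on $\mathbb Z/p\mathbb Z$ and control the number of values it takes. First, using Theorem \ref{main}, in every case the rank can be written uniformly as $\mbox{rk }\widehat{HF}(Y,[i]) = C + V_{[i]}$ with $V_{[i]} = \sum_{s\in\mathbb Z} \beta_s\,\varphi_{[i]}(s)$, where $C$ depends only on the case and $\beta_s\geq 0$ equals either $\mbox{rk }H_*(\hat A_s) - 1$ or that quantity plus $2\cdot\mathbf{1}_{|s|<\nu}$. Two key properties hold: (a) $\beta_s = 0$ for $|s|\geq g$, using $\mbox{rk }H_*(\hat A_s) = 1$ in that range together with $\nu\leq g$; (b) $\beta_s = \beta_{-s}$, by the Alexander-grading symmetry $\hat A_s\simeq\hat A_{-s}$ of the knot Floer complex.

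Next, since $\varphi_{[i]}(s) = |(i+p\mathbb Z)\cap [sq, (s+1)q)|$, a direct calculation yields $V_{[i+1]} - V_{[i]} = J_s := \beta_s - \beta_{s-1}$ precisely when $i\equiv sq - 1\pmod{p}$, and zero otherwise. Only the indices $s\in\{-(g-1),\ldots,g\}$ produce nonzero jumps, and assuming $2g\leq p$ (the complementary case gives a strictly better bound by direct bookkeeping), the $2g$ jump positions $p_s \equiv sq - 1\pmod{p}$ are distinct. The symmetry $\beta_s = \beta_{-s}$ forces $J_{1-s} = -J_s$, pairing the $2g$ jumps into $g$ opposite-sign pairs, with each pair's positions satisfying $p_s + p_{1-s}\equiv q - 2\pmod{p}$. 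Thus the involution $i\mapsto q-2-i$ on $\mathbb Z/p\mathbb Z$, implementing the conjugation symmetry on $Spin^c$-structures, exchanges the two positions within each jump pair; and a reflection-symmetric pairing of finitely many points on a cycle is necessarily non-crossing (its chords are parallel).

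Finally, for any non-crossing pairing of $2g$ points on a cycle with labels $(+J_s, -J_s)$ summing to zero around the cycle, the cumulative partial sums take at most $g+1$ distinct values: the ``active set'' of currently-open pairs at any moment evolves like a matched-parenthesis expression, so each of the $g$ ``open'' operations introduces a previously-unseen active set (one containing the newly opened label), producing exactly $g+1$ distinct active sets across a full traversal (including the empty starting set). Since $V_{[i]}$ is determined by the active set via $V_{[i]} = V_0 + \sum_{s\in\text{active}} J_s$, at most $g+1$ distinct values arise, yielding the bound $|\mathcal{R}|\leq g+1$. The main obstacle is confirming the non-crossing structure of the jump-position pairing: one must verify that the reflection on $\mathbb Z/p\mathbb Z$, when restricted to the $2g$ jump positions, admits no fixed point among them (which indeed holds when $2g\leq p$ by a quick arithmetic check on the congruence $2s\equiv 1\pmod{p}$), so that every jump pair is a genuine two-point orbit and the collection forms a bona fide reflection-symmetric, hence non-crossing, pairing.
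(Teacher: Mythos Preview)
Your approach is essentially the paper's: compute the jumps in rank as $[i]$ varies, show they pair off with opposite signs under the conjugation involution, verify the pairing is non-crossing, and deduce the $g+1$ bound from the resulting matched-parenthesis structure. Your cyclic framing (working on all of $\mathbb Z/p\mathbb Z$ with $g$ pairs of jumps) is slightly cleaner than the paper's linear one ($g-1$ pairs plus one unpaired jump on $\{1,\dots,|p|-1\}$), and your ``active set'' argument for the final count is more explicit than what the paper provides; but the substance is the same, corresponding to the paper's Lemmas~\ref{LemmaAux2}--\ref{LemmaAux5}.

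One small omission: your uniform expression $\mathrm{rk}\,\widehat{HF}(Y,[i])=C+V_{[i]}$ with $C$ independent of $[i]$ fails in the first subcase of Theorem~\ref{main}(ii) (where the constant is $-1$ or $+1$ depending on whether $0$ lies in $\{\lfloor(i+ps)/q\rfloor:s\in\mathbb Z\}$). The paper disposes of this by passing to the mirror knot so that $\nu(K)\ge\nu(-K)$; you should note the same reduction. Also, your hypothesis ``$2g\le p$'' should read $2g<|p|$ (to guarantee a jump-free position for your empty-active-set starting point and to ensure the $2g$ jump positions are distinct and fixed-point-free under the reflection); the complementary range $|p|\le 2g$ is, as you say, covered by the trivial conjugation-symmetry bound of Remark~\ref{RemarkAboutBoundOfSpinCStructures}.
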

\begin{remark} \label{RemarkAboutBoundOfSpinCStructures}
The cardinality of $\mathcal R$ is also bounded by $\frac{|p|+1}{2}$ if $p$ is odd, and by $\frac{|p|+2}{2}$ if $p$ is even, placing the utility of Theorem \ref{Estimate} into the range of $|p|\ge 2g+1$.
\end{remark}
%%%
%%%   

%%%
%%%
\begin{corollary}
The rational Dehn surgery genus $g_\mathbb Q(Y)$ of a rational homology $3$-sphere is bounded from below as 
$$ g_\mathbb Q(Y)\ge |\mathcal R|-1, $$
with $\mathcal R$ as in \eqref{DefinitionOfTheSetMathcalR}.
\end{corollary}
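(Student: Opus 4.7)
The plan is to deduce the corollary directly from Theorem \ref{Estimate}, so essentially no new content is required beyond assembling the definitions.

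First I would dispose of the trivial case: if $Y$ is not expressible as $S^3_r(K)$ for any knot $K \subset S^3$ and any $r \in \mathbb Q$, then by definition $g_\mathbb Q(Y) = \infty$, and the inequality $g_\mathbb Q(Y) \ge |\mathcal R| - 1$ holds automatically (since $\mathcal R$ is a finite set, being indexed by the finite set $Spin^c(Y)$).

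Assume then that $Y$ does arise as some rational Dehn surgery on a knot. Since $g_\mathbb Q(Y)$ is defined as a minimum of non-negative integers over a nonempty set, it is attained: there exist a knot $K \subset S^3$ and a rational number $r = p/q$ (with $\gcd(p,q)=1$, $p \ne 0$, $q>0$) such that $Y = S^3_{p/q}(K)$ and $g(K) = g_\mathbb Q(Y)$. Note that the set $\mathcal R$, defined intrinsically in terms of $Y$, does not depend on the chosen surgery description.

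Now I would apply Theorem \ref{Estimate} to this surgery presentation: it yields $|\mathcal R| \le g(K) + 1 = g_\mathbb Q(Y) + 1$, which rearranges to $g_\mathbb Q(Y) \ge |\mathcal R| - 1$, as desired. There is no genuine obstacle here, as Theorem \ref{Estimate} packages all of the work; the corollary is simply the observation that since $|\mathcal R|$ is an invariant of $Y$ while the genus bound holds for every surgery presentation, one may pass to the minimum over such presentations.
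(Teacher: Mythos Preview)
Your proposal is correct and matches the paper's approach: the corollary is stated in the paper without proof, as it follows immediately from Theorem~\ref{Estimate} by passing to a genus-minimizing surgery presentation, exactly as you do. Your handling of the trivial case $g_\mathbb Q(Y)=\infty$ and the observation that $\mathcal R$ is intrinsic to $Y$ are the only points needing mention, and you cover both.
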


\begin{example} Consider the genus 2 knot $K=6_2$ from the knot tables. For this knot $\nu(K) = 1$ and 
$$ \mbox{rk }H_*(\hat A_s) = \left\{
\begin{array}{cl}
1 & \quad ; \quad |s| \ne 1, \cr
3 & \quad ;\quad |s| = 1.
\end{array}
\right.
$$
Theorem \ref{main} implies that 
$$\mbox{rk } \widehat{HF}(S^3_{5/2}(K), [i]) = \left\{ 
\begin{array}{cl}
1 & \quad ; \quad [i] = 0, 1, \cr
3 & \quad ; \quad [i] = 2, 4, \cr
5 & \quad ; \quad [i] = 3. 
\end{array}
\right.$$
We conclude that $S^3_{5/2}(6_2)$ cannot be obtained by surgery on a knot of genus 1, and therefore $g_\mathbb Q(S^3_{5/2}(6_2)) = 2$. 
\end{example}
%%%
%%%
\begin{example}
For the genus 3 knot $K=-8_5$ with $\nu=2$, $\frac{7}{4}$--surgery yields a 3-manifold whose Heegaard Floer groups have ranks $(7, 9, 9, 7, 3, 1, 3)$ for the spin$^c$-structures $[i]=0,\dots, 6$ respectively. It follows that $g_\mathbb Q(S^3_{7/4}(-8_5))=3$, but note that \eqref{IntegralDehnSurgeryGenusBound} shows that $g_\mathbb Z(S^3_{7/4}(-8_5))\ge 4$. 
%
%On the other hand, $9$--surgery on the same knot yields Heegaard Floer groups with ranks $(5, 1, 3, 1, 1, 1, 1, 3, 1)$, which only leads to $g_\mathbb Q(S^3_{9}(-8_5))\ge 2$.
\end{example}
%%%%%%%%%%%%%%%%%%%%%%%%%%%%%%%%%%%%%%%%%%%%%%%%%%%%%%%%%%%%%%%%%%%%%%%%%%%%%%%%%%%%%%%%%%%%%%%%%%%%%%%%%%%%%%%%%%%%%%%%%%%%%%%%%%%%%%%%%%%%%%%%%%%%%%%%%%%%%%%%%%%%%%%%%%%%%%%%%%%%%%%%%%%%%%%%%%%%%%%%%%%%%%%%%%%%%%%%%%%%
\subsection{Obstructing reducible surgeries} \label{SectionOnCablingConjecture}
%%%
%%%

In this section we discuss how Theorem \ref{main} can be used to obstruct a Dehn surgery from being a reducible 3-manifold. A lot is known about this question, and we start by reminding the reader of the relevant results. This application was suggested to us by Tye Lidman, whose input we gratefully acknowledge. 

Let $r$ be a rational number and $K$ a nontrivial knot in $S^3$. We say that $S^3_r(K)$ is {\em reducible} if it possesses an essential 2-sphere, that is a 2-sphere that doesn't bound a 3-ball, in which case $r$ is called a {\em reducing slope} for $K$. Work of Gabai  \cite{Gabai} shows that if $S_r^3(K)$ is reducible then $r\ne 0$ and $S_r^3(K)$ is a connected sum. Gordon and Luecke in \cite{GordonLuecke1} show that a reducing slope is necessarily an integer, and they show in \cite{GordonLuecke3} that the geometric intersection number between any two reducing slopes is 1. This latter fact limits the number of possible reducing slopes of $K$ to two, and if there are two, they are consecutive integers. Further work of Gordon and Luecke \cite{GordonLuecke2} shows that if $S_r^3(K)$ is reducible, one of its connected summands is a lens space. Moreover, an integral homology sphere obtained by Dehn surgery is irreducible, so that a reducing slope $r$ satisfies $r\ne \pm 1$. Work of Matignon and Sayari \cite{MatignonSayari} shows that an integral reducing slope $r$ for a genus $g$ knot $K$ which isn't a cable knot, satisfies the bound $|r|\le 2g-1$. For other recent results please see \cite{Baker, Greene, HomKarakurtLidman}. 

Cable knots are known to have reducible surgeries, indeed the $pq$-surgery of a $(p,q)$-cable of the knot $K$ yields the reducible 3-manifold $S^3_{p/q}(K)\# L(q,p)$. The standing conjecture is that cable knots are the only knots that admit reducible surgeries.
\vskip1mm
%%%
%%%
%%%
\begin{conjecture}[{\em Cabling Conjecture}, Gonzalez-Acu\~na -- Short, \cite{AcunaGonzalesShort}] 
If $K$ is a non-trivial knot in $S^3$ which admits a reducible Dehn surgery, then $K$ is a cable knot. 
\end{conjecture}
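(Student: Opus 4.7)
The plan is to leverage Theorem~\ref{main} to derive strong rigidity constraints on any hypothetical non-cable knot admitting a reducible Dehn surgery. First I would gather the standard reductions. Suppose $K$ is a non-cable, non-trivial knot in $S^3$ with a reducing slope; by the results of Gabai, Gordon--Luecke, and Matignon--Sayari recalled above, this slope must be an integer $p$ with $2\le |p|\le 2g-1$, and $S^3_p(K)\cong Y_1\#L(a,q')$, where $a\ge 2$, $p=ak$, $\gcd(a,k)=1$, and $H_1(Y_1;\zz)=\zz/k$.

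Next, I would invoke the K\"unneth formula for $\widehat{HF}$ with field coefficients. Since $\widehat{HF}(L(a,q'),\t)\cong \F$ in each of the $a$ spin$^c$-structures, and the restriction map $\text{Spin}^c(S^3_p(K))\to \text{Spin}^c(Y_1)$ is $a$-to-$1$, the multiset $\{\text{rk }\widehat{HF}(S^3_p(K),[i])\}_{[i]\in \zz/p\zz}$ consists of exactly $a$ copies of the multiset $\{\text{rk }\widehat{HF}(Y_1,\t)\}_{\t\in \zz/k\zz}$. In particular, after a suitable relabelling, the rank sequence on $\zz/p\zz$ must be $a$-fold periodic under some permutation of spin$^c$-structures. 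Feeding this periodicity into the explicit formulae of Corollary~\ref{CorollaryAboutIntegerSurgeries} transforms it into a system of arithmetic equations on the ranks $\text{rk }H_*(\hat A_s)$ for $|s|<g$, coupled to $\nu(K)$ and subject to the $CFK^\infty$ symmetry $\text{rk }H_*(\hat A_s)=\text{rk }H_*(\hat A_{-s})$. Combined with the bound $|p|\le 2g-1$, each admissible factorization $p=ak$ of each admissible slope contributes a family of constraints that must all hold simultaneously.

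The main obstacle, and the reason this scheme produces only obstructions rather than a complete proof, is the closing structural step. The $a$-fold congruence forces the torsion ranks $\text{rk }H_*(\hat A_s)$ to be periodic modulo $k$ with an offset dictated by $\nu$ and the residue of $2\nu-1$ modulo $p$, but no classification is presently available identifying Floer-theoretic periodicity of this shape as the signature of a cable knot. Without such a classification, the approach yields sharp case-by-case obstructions that eliminate specific candidate counterexamples by verifying that their rank spectrum violates the required $a$-fold symmetry for every admissible factorization $p=ak$; a uniform proof of the Cabling Conjecture along these lines would require an additional structural theorem, independent of Theorem~\ref{main}, bridging this combinatorial rigidity back to the geometry of $K$ as a cable.
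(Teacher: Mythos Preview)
The statement you are addressing is a \emph{conjecture}, not a theorem; the paper does not prove it and does not claim to. There is therefore no ``paper's own proof'' to compare against. What the paper does is extract from Theorem~\ref{main} a family of necessary combinatorial conditions (Theorem~\ref{PertainingToCablingConjecture}) that the ranks $\text{rk }H_*(\hat A_s)$ must satisfy if $S^3_p(K)$ is reducible, and then checks these conditions against concrete candidate knots to rule them out.

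Your proposal is an accurate summary of exactly this strategy, and you are right to flag that it does not close to a proof of the conjecture. The K\"unneth argument and the $a$-fold multiplicity constraint you describe are precisely the content of Theorem~\ref{PertainingToCablingConjecture}. Your final paragraph correctly identifies the gap: the obstructions are sharp enough to eliminate individual knots but there is no known structural theorem converting the resulting periodicity constraints on $\text{rk }H_*(\hat A_s)$ into the conclusion that $K$ is a cable. So there is no error in your reasoning; the ``proof proposal'' is simply not a proof, and the paper makes no stronger claim.
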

%%%
%%%
%%%
The Cabling Conjecture is known to hold for several families of knots, including satellite knots \cite{Scharlemann}, alternating knots \cite{MenascoThistlethwaite} and genus one knots \cite{BoyerZhang}. 

In summary, if $r$-framed surgery on a knot $K$ of genus $g$ is reducible, then $K$ is hyperbolic and non-alternating with $g\ge 2$, $r$ is an integer in the range $1<|r|\le 2g-1$, and $S_r^3(K)\cong Y\# L(a,b)$ for some 3-manifold $Y$ and with $a$ dividing $r$. 

The Heegaard Floer homology groups are well suited to provide an obstruction for the reducibility of a rational homology sphere, as the ranks of said groups are easily computed for a connected sum, cf. \cite{Peter2}:
$$ \text{rk }\widehat{HF}(Y_1\#Y_2, \s_1\#s_2) = \left(\text{rk } \widehat{HF}(Y_1,\s_1) \right) \cdot \left(\text{rk } \widehat{HF}(Y_2,\s_2) \right).$$
Additionally, all lens spaces are $L$-spaces, that is 3-manifolds $Y$ with $\widehat{HF}(Y,\s) \cong \mathbb Z$ for all $\s \in Spin^c(Y)$. These two observation show that for a reducible surgery, the ranks of the Heegaard Floer groups have to appear with prescribed multiplicities. 
%%%
%%%
\begin{theorem} \label{PertainingToCablingConjecture}
Let $K$ be a nontrivial knot of genus $g$ and suppose that $p$ surgery on $K$ yields a reducible manifold, with $p$ an integer in the range $1<|p|\le 2g-1$. Then there exist integers $a,b$ with $a>1$ and $p = a\cdot b$, such that the list of integers $\mathcal L$ defined below, has all of its entries occur with a multiplicity that is divisible by $a$. 

The list $\mathcal L$ is defined as follows. If $\nu>0$ and $2\nu-1\le p$, or if $\nu =0$, let $\mathcal L$ be 
$$\mathcal L = \left\{\textstyle \sum_{s\in \mathcal I_{[i]}} \text{rk } (H_*(\hat A_s) - 1) \, \big| \,  i=0,\dots,|p|-1.\right\}. $$
If $\nu>0$ and $p\le 2\nu -1$, set $\mathcal L = \mathcal A \cup \mathcal B$ with 
$$
\begin{array}{l}\mathcal A = \left\{2+ \sum_{s\in \mathcal I_{[i]}} (\text{rk } H_*(\hat A_s) - 1)\, \big| \,  i=-\nu+1,\dots,-\nu+r.\right\}, \text{ and } \cr \cr
\mathcal B = \left\{\sum_{s\in \mathcal I_{[i]}} \text{rk } (H_*(\hat A_s) - 1) \, \big| \,  k=-\nu+r+1,\dots,-\nu+|p|.\right\}.
\end{array}
$$
In the above, $r\in \{0,\dots,|p|-1\}$ is uniquely determined by writing $2\nu-1 = np+r$ with $n$ an integer. The set $\mathcal I_{[i]}$ is given by $\mathcal I_{[i]} = \{s\in \mathbb Z \, |\, |s|<g \text{ and } s\equiv i \,(\text{mod } p)\}$. If $r=0$ then $\mathcal A$ is the empty set, and the second definition of $\mathcal L$ reduces to the first. 
\end{theorem}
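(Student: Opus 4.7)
The plan is to combine the classical topological results on reducible surgeries with the Künneth-type formula for Heegaard Floer under connected sums, and then to read off the multiset of ranks from Corollary \ref{CorollaryAboutIntegerSurgeries}.

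First, by the results of Gabai and Gordon--Luecke surveyed in Section \ref{SectionOnCablingConjecture}, the reducibility hypothesis guarantees that $S^3_p(K)\cong Y\#L(a,b)$ for some closed 3-manifold $Y$ and some (nontrivial) lens space $L(a,b)$ with $a\mid p$; comparing first homology forces $a>1$ since the lens space summand is nontrivial, so writing $p=ab$ gives the factorisation asserted in the theorem. Mirroring $K$ if necessary, which preserves both reducibility and the multiset of Heegaard Floer ranks, I will in addition arrange that we are not in the pathological subcase $\nu(K)=0$ with $\mathrm{rk}(\hat v_{0}+\hat h_{0})=2$ flagged in the remarks after Theorem \ref{main}; after this normalisation Corollary \ref{CorollaryAboutIntegerSurgeries} applies cleanly.

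Second, the connected-sum rank formula
\[\mathrm{rk}\,\widehat{HF}(Y_1\#Y_2,\mathfrak{s}_1\#\mathfrak{s}_2)=\mathrm{rk}\,\widehat{HF}(Y_1,\mathfrak{s}_1)\cdot\mathrm{rk}\,\widehat{HF}(Y_2,\mathfrak{s}_2)\]
together with the fact that $L(a,b)$ is an $L$-space with exactly $a$ spin$^c$-structures gives the following: for each $\mathfrak{t}\in\mathrm{Spin}^c(Y)$ there are exactly $a$ spin$^c$-structures $\mathfrak{s}$ on $S^3_p(K)$ with $\mathrm{rk}\,\widehat{HF}(S^3_p(K),\mathfrak{s})=\mathrm{rk}\,\widehat{HF}(Y,\mathfrak{t})$. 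Hence every value appearing in the multiset $\{\mathrm{rk}\,\widehat{HF}(S^3_p(K),\mathfrak{s})\}_{\mathfrak{s}}$ does so with multiplicity divisible by $a$. This is the topological input; from here on the argument is purely combinatorial.

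Third, I will compute the multiset explicitly via Corollary \ref{CorollaryAboutIntegerSurgeries}. In each of the relevant subcases the rank of $\widehat{HF}(S^3_p(K),[i])$ takes the form $c_{[i]}+\mathcal{S}_{[i]}$, where $\mathcal{S}_{[i]}=\sum_{s\in\mathcal{I}_{[i]}}(\mathrm{rk}\,H_{*}(\hat A_s)-1)$. When $\nu>0$ with $2\nu-1\le p$, or when $\nu=0$ (after the normalisation above), the constant $c_{[i]}=1$ is independent of $[i]$, and shifting every rank by $-1$ produces exactly the list $\mathcal{L}=\{\mathcal{S}_{[i]}\}$ of the theorem. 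When $\nu>0$ with $p\le 2\nu-1$, the corollary gives $c_{[i]}$ equal to one of two values differing by $2$, according to whether $[i]\in\mathcal{J}$ or not; shifting every rank by the smaller of these two constants produces precisely $\mathcal{A}\cup\mathcal{B}$, with $\mathcal{A}$ the values $2+\mathcal{S}_{[i]}$ on $\mathcal{J}$ and $\mathcal{B}$ the values $\mathcal{S}_{[i]}$ on the complement.

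Because a uniform shift of a multiset preserves all multiplicities, the divisibility-by-$a$ statement established in the second step transfers verbatim to the list $\mathcal{L}$, which is exactly the claim. The only point requiring care is the identification of the index range $i=-\nu+1,\dots,-\nu+r$ used to define $\mathcal{A}$ with the residue classes constituting $\mathcal{J}$ in Corollary \ref{CorollaryAboutIntegerSurgeries}, and similarly for $\mathcal{B}$; this is immediate from the definitions of $\mathcal{J}$ and of $r$, so the main obstacle amounts to careful bookkeeping of constants across the cases of the corollary.
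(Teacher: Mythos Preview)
Your proof is correct and follows essentially the same approach as the paper: reduce via Gordon--Luecke to a connected sum with a lens space, apply the K\"unneth formula for $\widehat{HF}$ to obtain the divisibility-by-$a$ condition on the multiset of ranks, and then use Corollary~\ref{CorollaryAboutIntegerSurgeries} to identify that multiset (up to a uniform additive shift) with $\mathcal{L}$. You are in fact slightly more explicit than the paper in invoking the mirroring normalisation to sidestep the $\nu=0$, $\mathrm{rk}(\hat v_0+\hat h_0)=2$ subcase, which the paper's own proof records in its case analysis but does not otherwise address.
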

%%%
%%%
\begin{example}
Consider the hyperbolic, non-alternating genus 2 knot $K=8_{20}$. For this knot one calculates 
$$\nu=0, \quad \quad \text{rk } H_*(\hat A_{\pm 1}) = 3, \quad \text{rk } H_*(\hat A_{s}) = 1\, \text{ for all } s\ne \pm 1. $$
Given these, the lists $\mathcal L$ from the preceding theorem for values of $p$ with $2\le|p|\le 3$, are easily computed, and are given by  
\vskip2mm
%%%
%%%
\begin{center}
\begin{tabular}{c|c}
$p$ & $\mathcal L$ \cr \hline \hline
$\pm 2$ & $\{4, 0\}$ \cr
$\pm 3$ & $\{2,0,2\}$ 
\end{tabular}

\end{center}
%%%
%%%
\vskip2mm
It is readily verified that none of the values of $p$ listed in the tables, possess a factor $a>1$ so that the associated $\mathcal L$ has its elements appear with multiplicity $a$. Accordingly, no surgery on $K=8_{20}$ can yield a counterexample to the Cabling Conjecture. 
\end{example}
%%%
%%%
\begin{example} Consider here the hyperbolic, non-alternating knot $K=-9_{47}$ of genus 3 (recall that $-K$ denotes the mirror of $K$). For this knot one obtains
$$\nu=1, \quad \quad \text{rk } H_*(\hat A_{\pm 2}) = 3, \quad H_*(\hat A_{\pm 1}) = 5, \quad  \text{rk } H_*(\hat A_{s}) = 1\, \text{ for all } s\ne \pm 1, \pm 2. $$
The associated lists $\mathcal L$ from Theorem \ref{PertainingToCablingConjecture} for $p$ in the range $2\le |p|\le 5$ are 
\vskip2mm
%%%
%%%
\begin{center}
\begin{tabular}{c|c}
$p$ & $\mathcal L$ \cr \hline \hline
2 & $\{4, 8\}$ \cr
3 & $\{6, 6, 0\}$ \cr
4 & $\{4, 4, 0 , 4\}$ \cr
5 & $\{2, 4, 0, 4, 2\}$ 
\end{tabular}
\quad \quad \quad \quad \quad \quad 
\begin{tabular}{c|c}
$p$ & $\mathcal L$ \cr \hline \hline
-2 & $\{6, 8\}$ \cr
-3 & $\{2, 6, 6\}$ \cr
-4 & $\{2, 4, 4, 4\}$ \cr
-5 & $\{2, 4, 2, 4, 2\}$ 
\end{tabular}
\end{center}
%%%
%%%
\vskip2mm
Theorem \ref{PertainingToCablingConjecture} excludes $-9_{47}$ (and hence $9_{47}$) from providing a counterexample to the Cabling Conjecture. 
\end{example}
%%%%%%%%%%%%%%%%%%%%%%%%%%%%%%%%%%%%%%%%%%%%%%%%%%%%%%%%%%%%%%%%%%%%%%%%%%%%%%%%%%%%%%%%%%%%%%%%%%%%%%%%%%%%%%%%%%%%%%%%%%%%%%%%%%%%%%%%%%%%%%%%%%%%%%%%%%%%%%%%%%%%%%%%%%%%%%%%%%%%
%\subsection{Organization} 
%In Section \ref{RationalSurgeryFormula} we review knot Floer homology and give more details on the rational surgery formula of Ozsv\'ath and Szab\'o from \cite{Peter21}. Section \ref{SectionOnProofs} gives proofs of the results from the introduction. %The final section gives a short table of invariants for knots up to 7 crossings which includes $\nu(K)$ and $\text{rk } H_*(\hat A_s)$, $s\in \mathbb Z$. 

{\bf Acknowledgements } I have enjoyed and benefitted from conversations with Tom Mark and Tye Lidman. 
%%%%%%%%%%%%%%%%%%%%%%%%%%%%%%%%%%%%%%%%%%%%%%%%%%%%%%%%%%%%%%%%%%%%%%%%%%%%%%%%%%%%%%%%%%%%%%%%%%%%%%%%%%%%%%%%%%%%%%%%%%%%%%%%%%%%%%%%%%%%%%%%%%%%%%%%%%%%%%%%%%%%%%%%%%%%%%%%%%%%%%%%%%%%%%%%%%%%%%%%%%%%%%%%%%%%%%%%%%%%%%%%%%%%%%%%%%%%%%%%%%%%%%%%%%%%%%%%%%%%%%%%%%%%%%%%%%%%%%%%%%%%%%%%%%%%%%%%%%%%%%%%%%%%%%%%%%%%%%%%%%%%%%%%%%%%%%%%%%%%%%%%%%%%%%%%
\section{Review of the rational surgery formula} \label{RationalSurgeryFormula}
%%%%
%%%%
%%%%
This section reviews background material on knot Floer homology and explains the rational surgery formula from \cite{Peter21}. Our notation and outline follow those given in \cite{Peter7, Peter21}. 
%%%%%%%%%%%%%%%%%%%%%%%%%%%%%%%%%%%%%%%%%%%%%%%%%%%%%%%%%%%%%%%%%%%%%%%%%%%%%%%%%%%%%%%%%%%%%%%%%%%%%%%%%%%%%%%%%%%%%%%%%%%%%%%%%%%%%%%%%%%%%%%%%%%%%%%%%%%%%%%%%%%%%%%%%%%%%%%%%%%%%%%%%%%%%%%%%%%%%%%%%%%%%%%%%%%%
\subsection{Knot Floer homology}Given a doubly pointed Heegaard diagram for a  knot $K$ in the 3-sphere, Ozsv\'ath and Szab\'o in \cite{Peter7} associate to it a $\mathbb Z^2$-filtered {\em knot Floer chain complex } $\left( CFK^\infty (K), \partial ^\infty \right)$. Its generators over $\mathbb Z$ are of the form $[x,i,j]$ where $x$ ranges through a finite set $\mathcal X$ determined by the doubly pointed Heegaard diagram, and $i,j$ are integers. The knot Floer chain complex has the structure of a $\mathbb Z[U]$-module with the action of $U$ on a generator $[x,i,j]$ given by $U\cdot [x,i,j] = [x,i-1,j-1]$. 

The aforementioned $\mathbb Z^2$-filtration $\mathcal F: \mathcal X \times \mathbb Z^2 \to \mathbb Z^2$ is given by $\mathcal F([x,i,j]) = (i,j)$, and we let $C\{i,j\}$ denote the subgroup of $CFK^\infty(K)$ generated by elements in filtration level $(i,j)$. The differential $\partial ^\infty$ respects the filtration in that 
\begin{equation} \label{RespectfulDifferential}
\partial ^\infty \left(C\{i_0,j_0\}  \right) \subseteq \bigoplus _{i\le i_0,\,  j\le j_0} C\{i,j\}.
\end{equation}
%
%It shall be useful to picture the groups $C\{i,j\}$ as sitting at coordinate $(i,j)$ in the coordinate plane.

Property \eqref{RespectfulDifferential} of the differential $\partial ^\infty$ allows for the definition of a number of subchain complexes of $CFK^\infty(K)$ and associated quotient chain complexes. They are parametrized by $s, t\in \mathbb Z$:
\begin{equation} \label{TheChainComplexes}
\begin{array}{ll}
 C\{i\le s \text{ and } j\le t\}  :=  \bigoplus _{i\le s,\,  j\le t} C\{i,j\}, \quad \quad \quad & 
\hat A_s  : = \displaystyle \frac{C\{i\le 0 \text{ and } j\le s\}}{C\{i\le -1 \text{ and } j\le s-1\}},
 \cr & \cr
C\{i\le s\}  : =  \bigoplus _{i\le s} C\{i,j\}, \quad & 
\hat B_s:= \displaystyle  \frac{C\{i\le 0\}}{C\{i\le -1\}} =: C\{i=0\}, \cr & \cr
C\{j\le t\}  : =  \bigoplus _{j\le t} C\{i,j\}, \quad & 
\hat C_t := \displaystyle  \frac{C\{j\le 0\}}{C\{j\le -1\}}=: C\{j=0\}.
\end{array} 
\end{equation} 
The $\mathbb Z[U]$-module structure extends to these complexes, though it is trivial on $\hat A_s$, $\hat B_s$ and $\hat C_s$. We note that $\hat B_s$ is independent of $s$. Indeed the role of the index $s$ is merely for bookkeeping and will prove useful going forward. When we don't care about the label, we shall simply write $\hat B$ to mean $\hat B_s$. Analogous comments apply to $\hat C_t$.  %It is useful to visualize $\hat A_s$ as \dots

As explained in \cite{Peter7}, there is a $\mathbb Z[U]$-module quasi-isomorphism of chain complexes
$J: C\{j\le 0 \} \to C\{i\le 0 \},$ induced by interchanging the two base points in the doubly pointed diagram used to define $CFK^\infty(K)$. This induces a quasi-isomorphism $\hat J : C\{j=0\} \to C\{i=0\}$. For any $s\in \mathbb Z$ let $\pi_s:C\{i\le 0 \text{ and } j\le s\}\to C\{i\le 0\}$ be the projection map, and let $\hat \pi_s:\hat A_s \to \hat B$ be the associated map of quotient complexes. Similarly, let $\hat \Pi_s : U^s(\hat A_s) \to \hat C$ be induced by the projection map $\Pi_s : C\{i\le -s \text{ and } j\le 0\}\to C\{j\le 0\}$. These maps give rise to two chain maps $\hat v_s, \hat h_s : \hat A_s \to \hat B$ defined as   
\begin{equation} \label{VsAndHs}
\hat v_s = \hat \pi_s \quad \quad \text{ and } \quad \quad \hat h_s =  \hat J\circ \hat \Pi_s   \circ U^s .
\end{equation}
It is easy to verify that $\hat v_s$ and $\hat h_{-s}$ are isomorphisms for $s\ge g$, while $\hat v_s=0=\hat h_{-s}$ whenever $s<-g$. 

Equation \eqref{DefinitionOfNu} from the introduction defines the knot invariant $\nu = \nu(K)$ as 
$$
\nu(K) = \min\{ s \in \mathbb Z \, |\, (\hat v_s)_*:H_*(\hat A_s) \to \mathbb Z \mbox{ is nontrivial.} \}.
$$
\begin{remark} \label{RemarkAboutNuAndVsAndHs}
The definition of $\nu(K)$ implies that $(\hat v_s)_*=0$ for $s< \nu$, and by symmetry that $(\hat h_{s})_*=0$ for $s>-\nu$. It is not hard to see that conversely, $(\hat v_s)_*$ and $(\hat h_s)_*$ are nontrivial maps for $s\ge \nu$ and $s\le -\nu$ respectively. 
\end{remark}
%%%%%%%%%%%%%%%%%%%%%%%%%%%%%%%%%%%%%%%%%%%%%%%%%%%%%%%%%%%%%%%%%%%%%%%%%%%%%%%%%%%%%%%%%%%%%%%%%%%%%%%%%%%%%%%%%%%%%%%%%%%%%%%%%%%%%%%%%%%%%%%%%%%%%%%%%%%%%%%%%%%%%%%%%%%%%%%%%%%%%%%%%%%%%%%%%%%%%%%%%%%%%%%%%%%%
\subsection{The rational surgery formula}
In this section we fix two nonzero relatively prime integers $p,q$. For $i\in \mathbb Z$ define chain complexes $\hat{\mathbb  A}_i$ and $\hat{\mathbb B}_i$ via 
\begin{equation} \label{AiAndBiComplexes}
\hat{\mathbb A}_i = \displaystyle \bigoplus _{s\in \mathbb Z} \left( s,\hat A_{\lfloor \frac{i+ps}{q} \rfloor} \right)\quad \text{ and } \quad \hat{\mathbb B}_i =  \displaystyle \bigoplus _{s\in \mathbb Z} (s, \hat B) .
\end{equation}
Here $(s,\hat A_{\lfloor\frac{i+ps}{q} \rfloor})$ and $(s,\hat B)$ are copies of $A_{\lfloor\frac{i+ps}{q} \rfloor}$ and $\hat B$ respectively.  The complex $\hat{\mathbb B}_i$ is independent of $i$, while $\hat{\mathbb A}_i$ depends on $i$ only through its modulus $[i]$ with respect to $p$. To emphasize this we shall write $\hat{\mathbb A}_{[i]}$ and $\hat{\mathbb B}_{[i]}$.

We define chain maps $\hat v, \hat h :\hat{\mathbb A}_{[i]} \to \hat{\mathbb B}_{[i]}$ by requiring that $\hat v$ map the summand $(s, \hat A_{\lfloor \frac{i+ps}{q}\rfloor})$ of $\hat{\mathbb A}_{[i]}$ to the summand $(s,\hat B)$ of $\hat{\mathbb B}_{i}$ via $\hat v_{\lfloor \frac{i+ps}{q}\rfloor}$. Similarly, we require that $\hat h$ map $(s, \hat A_{\lfloor \frac{i+ps}{q}\rfloor})$ to $(s-1,\hat B)$ via $\hat h_{\lfloor \frac{i+ps}{q}\rfloor}$. 

Let $r=p/q$ and define a new chain map $\hat{\mathbb D}_{r,[i]}:\hat{\mathbb A}_{[i]} \to \hat{\mathbb B}_{i]}$ by settting 
$$ \hat{\mathbb D}_{r,[i]} \left( \{(s,a_s)\}_{s\in \mathbb{Z}} \right) = \{(s,b_s)\}_{s\in \mathbb{Z}} \quad \quad 
\mbox{ with } \quad \quad 
b_s = \hat v_{\lfloor \frac{i+ps}{q} \rfloor}(a_s) + \hat h_{\lfloor \frac{i+p(s-1)}{q} \rfloor} (a_{s-1}). $$
Let $\hat{\mathbb X}_{r,[i]}$ be the mapping cone of $\hat{\mathbb D}_{r,[i]}$. We thus arrive at the following algorithm for computing the Heegaard Floer groups of a rational surgery on a knot in $S^3$.
%%%
%%%
%%%
\begin{theorem}[Ozsv\'ath-Szab\'o \cite{Peter21}] \label{RationalSurgeryTheorem}
Let $K\subset S^3$ be a knot and let $p,q\in\mathbb{Z}$ be a pair of relatively prime, nonzero integers. Then there is an affine identification $[i]\mapsto \s_{[i]}$ of $\mathbb Z/p\mathbb Z$ with $Spin^c(Y)$, such that  
$$ \widehat{HF}(S^3_{p/q}(K),\s_{[i]}) \cong H_*(\hat{\mathbb{X}}_{[i],r}).$$
\end{theorem}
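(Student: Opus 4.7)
The plan is to bootstrap from a large integer surgery computation via a mapping cone induced by the surgery exact triangle, following Ozsv\'ath--Szab\'o's original approach. The base case is the large-$n$ surgery formula: for $n\geq 2g-1$ there is a labeling of $Spin^c(S^3_n(K))$ by integers $s$ in a fundamental domain for $\mathbb Z/n\mathbb Z$ such that $\widehat{HF}(S^3_n(K),\mathfrak{s}_s)\cong H_*(\hat A_s)$, and under this identification the maps $(\hat v_s)_*$ and $(\hat h_s)_*$ from \eqref{VsAndHs} are realized as the Heegaard Floer maps induced by the 2-handle cobordism $W_n: S^3_n(K)\to S^3$ equipped with its two natural $\mathrm{spin}^c$-extensions (the $U^s$ factor in the definition of $\hat h_s$ tracking the difference in first Chern classes between the two extensions).

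\textbf{Reduction to rational slopes via a mapping cone.} To promote this to rational slopes, I would realize $S^3_{p/q}(K)$ as integer surgery on a framed two-component link $K\cup c$, where $c$ is an unknot whose surgery coefficient converts an integer framing of $K$ into the slope $p/q$. Applying the surgery exact triangle to the $c$-component (or equivalently the link surgery formula specialized to this two-component link) expresses $\widehat{HF}(S^3_{p/q}(K),\mathfrak{s}_{[i]})$ as the homology of a mapping cone assembled from large-surgery complexes on $K$. The source and target of this mapping cone are precisely $\hat{\mathbb A}_{[i]}$ and $\hat{\mathbb B}_{[i]}$ of \eqref{AiAndBiComplexes}: the summand $(s,\hat A_{\lfloor(i+ps)/q\rfloor})$ records, via its floor index, which $\mathrm{spin}^c$ structure on the preliminary integer surgery of $K$ is cobordant, through the 2-handle attached by $c$, to $\mathfrak{s}_{[i]}$ on $S^3_{p/q}(K)$. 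The two terms $\hat v_{\lfloor(i+ps)/q\rfloor}$ and $\hat h_{\lfloor(i+p(s-1))/q\rfloor}$ defining $\hat{\mathbb D}_{r,[i]}$ are the two cobordism maps arising from the two $\mathrm{spin}^c$-extensions of this 2-handle, the $s\mapsto s-1$ shift on the $\hat h$-piece reflecting the difference between these extensions after the slope change.

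\textbf{Spin$^c$ identification, truncation, and main obstacle.} The affine identification $[i]\mapsto \mathfrak{s}_{[i]}$ of $\mathbb Z/p\mathbb Z$ with $Spin^c(S^3_{p/q}(K))$ comes from evaluating first Chern classes on the surgery cobordism from $S^3$ to $S^3_{p/q}(K)$, exploiting $H^2(S^3_{p/q}(K);\mathbb Z)\cong \mathbb Z/p\mathbb Z$. Although $\hat{\mathbb A}_{[i]}$ and $\hat{\mathbb B}_{[i]}$ are a priori infinite direct sums, the fact that $\hat v_s$ and $\hat h_{-s}$ are isomorphisms for $s\geq g$ (and that $\hat v_s = \hat h_{-s} = 0$ for $s<-g$) allows a truncation of the mapping cone to a finite subcomplex of the same quasi-isomorphism type, which both makes $H_*(\hat{\mathbb X}_{r,[i]})$ well-defined and furnishes the required chain-level identification. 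The main obstacle is the $\mathrm{spin}^c$ bookkeeping: one must verify that the floor-function indexing $\lfloor(i+ps)/q\rfloor$ and the shift $s\mapsto s-1$ match the honest Chern-class calculation uniformly across the summands, including the affine nature of the correspondence $[i]\mapsto \mathfrak{s}_{[i]}$. Once this arithmetic is nailed down, the theorem follows by combining the exact triangle with the large surgery formula of the first step.
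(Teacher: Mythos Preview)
The paper does not supply its own proof of this theorem: it is stated as background in Section~\ref{RationalSurgeryFormula} with attribution to Ozsv\'ath--Szab\'o \cite{Peter21}, and no argument is given beyond the citation. So there is nothing in the present paper to compare your proposal against.

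That said, your sketch is a faithful high-level outline of the original Ozsv\'ath--Szab\'o argument. Their proof does proceed by first establishing the large integer surgery formula identifying $\widehat{HF}(S^3_n(K),\mathfrak s_s)\cong H_*(\hat A_s)$ for $n\gg 0$, then realizing $p/q$-surgery on $K$ as integer surgery on a link $K\cup O_{q}$ obtained by adjoining unknotted components (a ``Morse surgery'' on the complement), and finally iterating the surgery exact triangle to assemble the mapping cone $\hat{\mathbb X}_{r,[i]}$. The floor-function indexing and the $s\mapsto s-1$ shift you describe do arise exactly from the $\mathrm{spin}^c$ arithmetic on the intermediate cobordisms, and the truncation argument you mention is indeed what makes the infinite direct sums tractable. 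Your identification of the main obstacle---the uniform $\mathrm{spin}^c$ bookkeeping---is accurate; in \cite{Peter21} this is handled by a careful Chern-class computation on the iterated 2-handle cobordism. Nothing in your outline is wrong, but of course the actual proof in \cite{Peter21} requires substantial technical work (particularly the identification of the cobordism-induced maps with $\hat v_s$ and $\hat h_s$) that a sketch at this level necessarily elides.
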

%%%
%%%

We finish this section with a discussion regarding computation of the homology groups of the mapping cone $\hat{\mathbb X}_{r,[i]}$. Let $\hat{\mathbb A}_{[i],j}$, $\hat{\mathbb B}_{[i],j}$ and $\hat{\mathbb X}_{r,[i],j}$ denote the degree $j$ subgroups of $\hat{\mathbb A}_{[i]}$, $\hat{\mathbb B}_{[i]}$ and $\hat{\mathbb X}_{r,[i]}$ respectively, and note that $\hat{\mathbb X}_{r,[i],j} = \hat{\mathbb A}_{[i],j} \oplus \hat{\mathbb B}_{[i],j+1}$. Let $\partial _{\hat{\mathbb X}}$ be the differential of $\hat{\mathbb X}_{r,[i]}$, and let  $\partial _{\hat{\mathbb X},j}$ be its restriction to $\hat{\mathbb X}_{r,[i],j}$. Then 
$$ \partial _{\hat{\mathbb X},j} (a,b) = (\partial _{\hat{\mathbb A},j} a , \, \partial _{\hat{\mathbb B},j+1} b+ (-1)^j \,  \hat{\mathbb D}_{r,[i],j} a), $$
where $\partial _{\hat{\mathbb A}}$ and $\partial_{\hat{\mathbb B}}$ are the differentials of $\hat{\mathbb A}_{[i]}$ and $\hat{\mathbb B}_{[i]}$ respectively, while $\partial _{\hat{\mathbb A},j}$ and $\partial_{\hat{\mathbb B},j}$ are their restrictions to $\hat{\mathbb A}_{[i],j}$ and $\hat{\mathbb B}_{[i],j}$. Also, $\hat{\mathbb D}_{r,[i],j}$ is the restriction of $\hat{\mathbb D}_{r,[i]}$ to $\hat{\mathbb A}_{[i],j}$. 

With these details made explicit, it is now easy to verify that 
$$ 0 \to \hat{\mathbb B}_{[i],j+1} \stackrel{\iota}{\longrightarrow} \hat{\mathbb X}_{r,[i],j} \stackrel{\pi}{\longrightarrow} \hat{\mathbb A}_{[i],j}\to 0,  $$
is a short exact sequence of chain complexes, with $\iota$ and $\pi$ being the inclusion and projection maps. The induced long exact sequence (where $\hat{\mathbb D}_{r,[i],j}$ is abbreviated to $\hat{\mathbb D}_j$)
\begin{equation} \label{MappingConeLES} 
\dots \to H_*(\hat{\mathbb A}_{[i],j+1}) \stackrel{(\hat{\mathbb D}_{j+1})_*}{\longrightarrow} H_*(\hat{\mathbb B}_{[i],j+1}) \stackrel{\iota_*}{\longrightarrow} H_*(\hat{\mathbb X}_{r,[i],j}) \stackrel{\pi_*}{\longrightarrow} H_*(\hat{\mathbb A}_{[i],j})\stackrel{(\hat{\mathbb D}_{j})_*}{\longrightarrow} H_*(\hat{\mathbb B}_{[i],j}) \to \dots 
\end{equation}
is easily seen to have its connecting homomorphism equal to $(\hat{\mathbb D}_j)_*$. This latter sequence leads to the next straightforward but useful observation. 
%%%
%%%
\begin{theorem} \label{RankOfMappingCone}
The rank of the homology group $H_*(\hat{\mathbb X}_{r,[i]})$ of the mapping cone $\hat{\mathbb X}_{r,[i]}$ of $\hat{\mathbb D}_{r,[i]}:\hat{\mathbb A}_{[i]}\to \hat{\mathbb B}_{[i]}$, can be computed with the aid of \eqref{MappingConeLES} as
$$\text{rk } H_*(\hat{\mathbb X}_{r,[i]}) = \text{rk }[\kerr (\hat{\mathbb D}_{r,[i]})] + \text{rk } [\cokerr (\hat{\mathbb D}_{r,[i])}]. $$
\end{theorem}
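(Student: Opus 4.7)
The plan is a quick extraction from the long exact sequence \eqref{MappingConeLES}: chop the sequence into short exact pieces relating $H_*(\hat{\mathbb X}_{r,[i]})$ to the kernel and cokernel of the induced map on homology, and then invoke additivity of ranks.

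More precisely, exactness of \eqref{MappingConeLES} at $H_*(\hat{\mathbb B}_{[i],j+1})$ identifies $\ker(\iota_*) = \mathrm{image}((\hat{\mathbb D}_{j+1})_*)$, so $\iota_*$ factors as an injection $\cokerr((\hat{\mathbb D}_{j+1})_*) \hookrightarrow H_*(\hat{\mathbb X}_{r,[i],j})$. Exactness at $H_*(\hat{\mathbb A}_{[i],j})$ gives $\mathrm{image}(\pi_*) = \kerr((\hat{\mathbb D}_{j})_*)$. Combined with exactness at $H_*(\hat{\mathbb X}_{r,[i],j})$, these three identifications assemble into the short exact sequence
$$ 0 \to \cokerr ((\hat{\mathbb D}_{j+1})_*) \to H_*(\hat{\mathbb X}_{r,[i],j}) \to \kerr ((\hat{\mathbb D}_{j})_*) \to 0. $$
Additivity of rank in short exact sequences of finitely generated abelian groups then gives
$$ \mbox{rk } H_*(\hat{\mathbb X}_{r,[i],j}) = \mbox{rk}\bigl(\cokerr ((\hat{\mathbb D}_{j+1})_*)\bigr) + \mbox{rk}\bigl(\kerr ((\hat{\mathbb D}_{j})_*)\bigr), $$
and summing over $j$ produces the asserted identity, provided one interprets $\kerr(\hat{\mathbb D}_{r,[i]})$ and $\cokerr(\hat{\mathbb D}_{r,[i]})$ as the kernel and cokernel of the induced map $(\hat{\mathbb D}_{r,[i]})_*: H_*(\hat{\mathbb A}_{[i]}) \to H_*(\hat{\mathbb B}_{[i]})$.

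The only point requiring care is the finiteness of the sum. This follows either from Theorem \ref{RationalSurgeryTheorem}, which identifies $H_*(\hat{\mathbb X}_{r,[i]})$ with the Heegaard Floer homology of a closed 3-manifold, hence a finitely generated abelian group, or more directly from the fact that $\hat v_s$ is an isomorphism for $s\ge g$ and $\hat h_s$ is an isomorphism for $s\le -g$, which forces $(\hat{\mathbb D}_j)_*$ to be an isomorphism outside a finite range of $j$ and so kills the corresponding contributions to kernel and cokernel. I do not anticipate any substantive obstacle; the argument is essentially a rank count against the long exact sequence already assembled in \eqref{MappingConeLES}.
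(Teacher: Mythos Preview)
Your argument is correct and is precisely the expansion the paper has in mind: the text offers no proof beyond remarking that the long exact sequence \eqref{MappingConeLES} ``leads to the next straightforward but useful observation,'' and your extraction of the short exact sequence
\[
0 \to \cokerr((\hat{\mathbb D}_{j+1})_*) \to H_*(\hat{\mathbb X}_{r,[i],j}) \to \kerr((\hat{\mathbb D}_j)_*) \to 0
\]
followed by rank additivity is the intended filling-in. One small imprecision: in your second finiteness argument you write that the isomorphism properties of $\hat v_s$ and $\hat h_s$ for $|s|\ge g$ force $(\hat{\mathbb D}_j)_*$ to be an isomorphism outside a finite range of $j$, but $j$ is the homological degree while $s$ is the direct-sum index, and these are not the same; your first finiteness argument via Theorem~\ref{RationalSurgeryTheorem} is clean and suffices.
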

%%%%%%%%%%%%%%%%%%%%%%%%%%%%%%%%%%%%%%%%%%%%%%%%%%%%%%%%%%%%%%%%%%%%%%%%%%%%%%%%%%%%%%%%%%%%%%%%%%%%%%%%%%%%%%%%%%%%%%%%%%%%%%%%%%%%%%%%%%%%%%%%%%%%%%%%%%%%%%%%%%%%%%%%%%%%%%%%%%%%%%%%%%%%%%%%%%%%%%%%%%%%%%%%%%%%%%%%%%%%%%%%%%%%%%%%%%%%%%%%%%%%%%%%%%%%%%%%%%%%%%%%%
%%%%%%%%%%%%%%%%%%%%%%%%%%%%%%%%%%%%%%%%%%%%%%%%%%%%%%%%%%%%%%%%%%%%%%%%%%%%%%%%%%%%%%%%%%%%%%%%%%%%%%%%%%%%%%%%%%%%%%%%%%%%%%%%%%%%%%%%%%%%%%%%%%%%%%%%%%%%%%%%%%%%%%%%%%%%%%%%%%%%%%%%%%%%%%%%%%%%%%%%%%%%%%%%%%%%%%%%%%%%%%%%%%%%%%%%%%%%%%%%%%%%%%%%%%%%%%%%%%%%%%%%%%%%%%%%%%%%%%%%%%%%%%%%%%%%%%%%%%%%%%%%%%%%%%%%%%%%%%
\section{Proofs} \label{SectionOnProofs}
%%%%%%%%%%%%%%%%%%%%%%%%%%%%%%%%%%%%%%%%%%%%%%%%%%%%%%%%%%%%%%%%%%%%%%%%%%%%%%%%%%%%%%%%%%%%%%%%%%%%%%%%%%%%%%%%%%%%%%%%%%%%%%
This section is devoted to the proofs of Theorems \ref{main},  \ref{Estimate} and \ref{PertainingToCablingConjecture}.
\subsection{Proof of Theorem \ref{main}} Let $p,q$ be relatively prime, non-zero integers with $q>0$, and write $r=p/q$. We shall work through the various cases in Theorem \ref{main} separately. The computation of the rank of $\widehat{HF}(S^3_{p/q}(K),[i])$ appeals to Theorems \ref{RationalSurgeryTheorem} and \ref{RankOfMappingCone} for help, which in tandem assert  
$$\text{rk }\widehat{HF}(S^3_{p/q}(K),[i]) = \text{rk }[ \kerr (\hat{\mathbb D}_{r,[i]})_*] + \text{rk } [\cokerr (\hat{\mathbb D}_{r,[i]})_*].$$
It is the ranks of the kernel and cokernel of $(\hat{\mathbb D}_{p/q,[i]})_*$ that we shall compute explicitly.  To cut down on notation we introduce the shorthand symbols 
$$ \hat H^A_s: = H_*(\hat A_s ) \quad \text{ and } \quad  \hat H^B_s: = H_*(\hat B_s)\cong \mathbb Z.$$
\vskip1mm
\noindent {\bf Case 1: $\mathbf{\nu>0}$ and $\mathbf {0<(2\nu -1)q\le p}$. } Fix a spin$^c$-structure $[i]\in\mathbb Z/p \mathbb Z$, let 
$$s_0=\max\left\{ s\in \mathbb Z\, \big|\, \left\lfloor \frac{i+ps}{q}\right\rfloor < \nu\right\},$$
and note that the assumption $(2\nu-1)q\le p$ implies that one of the two mutually exclusive cases musts occur:
\vskip2mm
\indent {\bf\bf Subase ($\alpha$)}  $-\nu< \left\lfloor \frac{i+ps_0}{q} \right\rfloor$ and $\left\lfloor \frac{i+p(s_0-1)}{q} \right\rfloor \le -\nu$. \\
%%%
%%%
\indent{\bf Subase ($\beta$)} $\left\lfloor \frac{i+ps_0}{q} \right\rfloor \le -\nu$.  
\vskip2mm
To see why these are the only two possibilities, assume that Subase ($\beta$) didn't occur. Then $-\nu < \lfloor \frac{i+ps_0}{q}\rfloor$ and we must show that $\lfloor \frac{i+p(s_0-1)}{q}\rfloor \le -\nu$, as stipulated in Subcase ($\alpha$). This is an easy computation, relying on the inequality $(2\nu-1)\le \frac{p}{q}$:
$$
\left\lfloor \frac{i+p(s_0-1)}{q}\right\rfloor  = \left\lfloor \frac{i+ps_0}{q} - \frac{p}{q}\right\rfloor \le   \left\lfloor \frac{i+ps_0}{q} \right\rfloor  -  \left\lfloor  \frac{p}{q}\right\rfloor < \nu -(2\nu-1) = -\nu +1.
$$
With this understood, we turn to Subase ($\alpha$). Here $\hat v_{\left\lfloor \frac{i+ps}{q}\right\rfloor, *} =0$ for all $s\le s_0$ (see Remark \ref{RemarkAboutNuAndVsAndHs}), and similarly $\hat h_{\left\lfloor \frac{i+ps}{q}\right\rfloor , *} =0$ for $s\ge s_0$. This is indicated in Figure \ref{pic1a} where dashed arrows represent the zero map (and where vertical arrows correspond to the various $\hat v_{t,*}$ maps, and slanted arrows to the $\hat h_{t,*}$ maps, conventions we use in all diagrams in this proof). Thus the cokernel of $(\hat{\mathbb D}_{p/q,[i]})_*$ is trivial, while its kernel has rank 
\begin{equation} \label{KernelRankInCase1aAlpha}
\text{rk } [\kerr (\hat{\mathbb D}_{r,[i]})_*] =  \text{rk } H^A_{\left\lfloor \frac{i+ps_0}{q}\right\rfloor}  + 
\sum _{s\in \mathbb Z, \, s\ne s_0} \left(\text{rk } H^A_{\left\lfloor \frac{i+ps}{q}\right\rfloor}  -1\right).
\end{equation}
Theorem \ref{RankOfMappingCone} them implies that 
\begin{align} \label{RankComputationForCase1a}
\text{rk } \widehat{HF}(S^3_{p/q}(K),[i]) & = \text{rk } H^A_{\left\lfloor \frac{i+ps_0}{q}\right\rfloor}  + 
\sum _{s\in \mathbb Z, \, s\ne s_0} \left(\text{rk } H^A_{\left\lfloor \frac{i+ps}{q}\right\rfloor}  -1\right),\cr
& = 1+ \sum _{s\in \mathbb Z} \left(\text{rk } H^A_{\left\lfloor \frac{i+ps}{q}\right\rfloor} -1\right),\cr
& = 1+\sum _{t\in \mathbb Z} \varphi_i(t)\left(\text{rk } H^A_t -1\right), \cr
& = 1+\mathcal S_i.
\end{align}
\begin{figure}[htb!] 
\centering
\includegraphics[width=15cm]{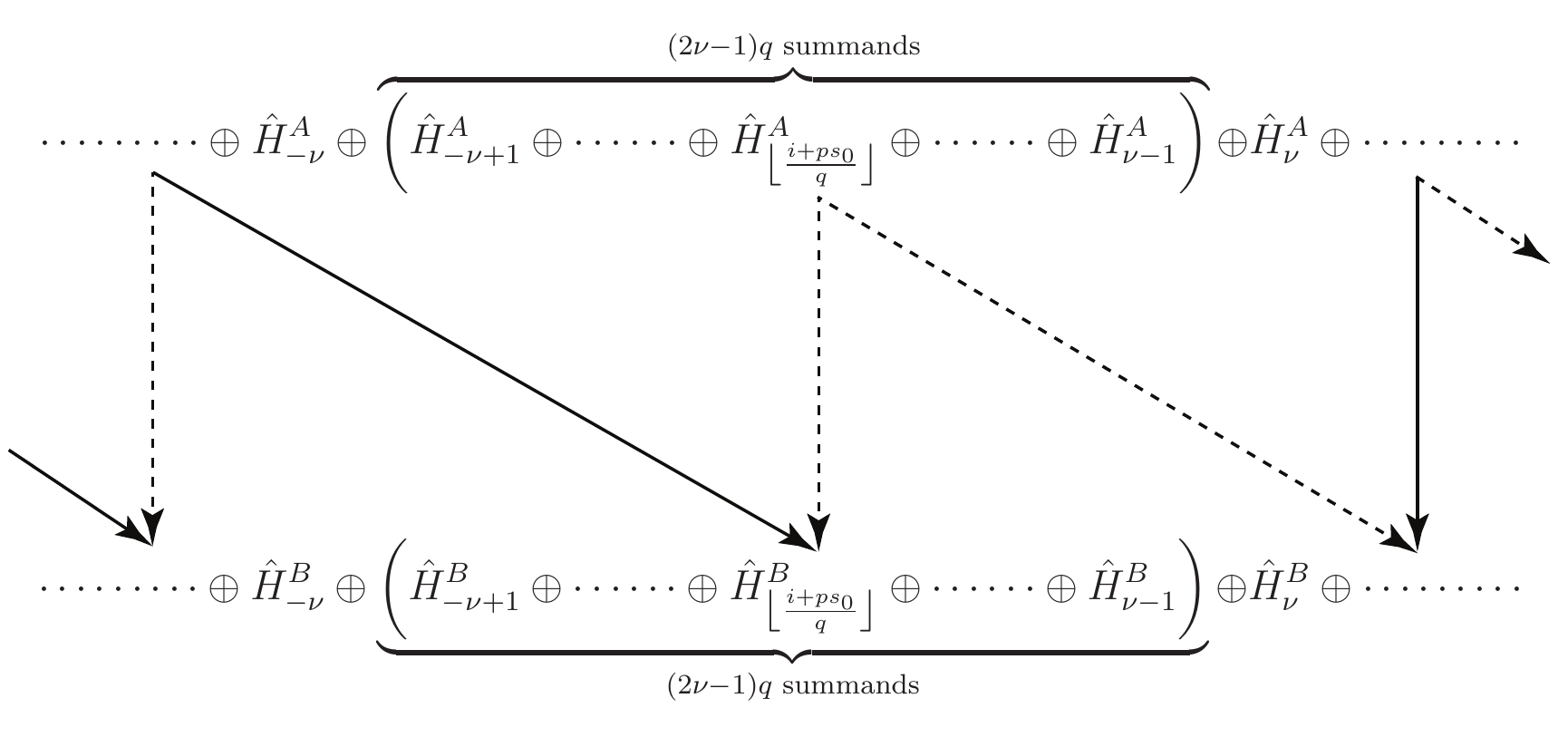}
%\put(-180,130){\tiny Case $(i)$}
%\put(-205,90){\tiny Case $(ii)$}
\caption{{\bf Case 1, Subcase ($\alpha$)}. There is no interaction between the nontrivial vertical and horizontal  homomorphisms $\hat v_{s,*}$ and $\hat h_{s,*}$. }  \label{pic1a}
\end{figure}
%%%
%%%

In Subcase ($\beta$) we find that $\hat v_{\left\lfloor \frac{i+ps}{q}\right\rfloor, *} =0$ for $s\le s_0$ and $\hat h_{\left\lfloor \frac{i+ps}{q}\right\rfloor , *} =0$ for $s > s_0$. Thus, the only interacting vertical and horizontal map (sharing the codomain $\hat H^B_{\lfloor \frac{i+p(s_0+1)}{q} \rfloor }$) are  $\hat h_{\left\lfloor \frac{i+ps_0}{q}\right\rfloor, *}$ and $\hat v_{\left\lfloor \frac{i+p(s_0+1)}{q}\right\rfloor, *}$. This situation is illustrated in Figure \ref{pic1b}, which carries the same notational conventions as Figure \ref{pic1a}. In particular, we find again that the cokernel of $(\hat{\mathbb D}_{r,[i]})_*$ is trivial, while its kernel has the same rank already computed in \eqref{KernelRankInCase1aAlpha}. Given this, the rank of $\widehat{HF}(S^3_r(K),[i])$ for Subcase $(\beta)$ is the same as that compute in Subcase $(\alpha)$ in \eqref{RankComputationForCase1a}.  
\begin{figure}[htb!] 
\centering
\includegraphics[width=16cm]{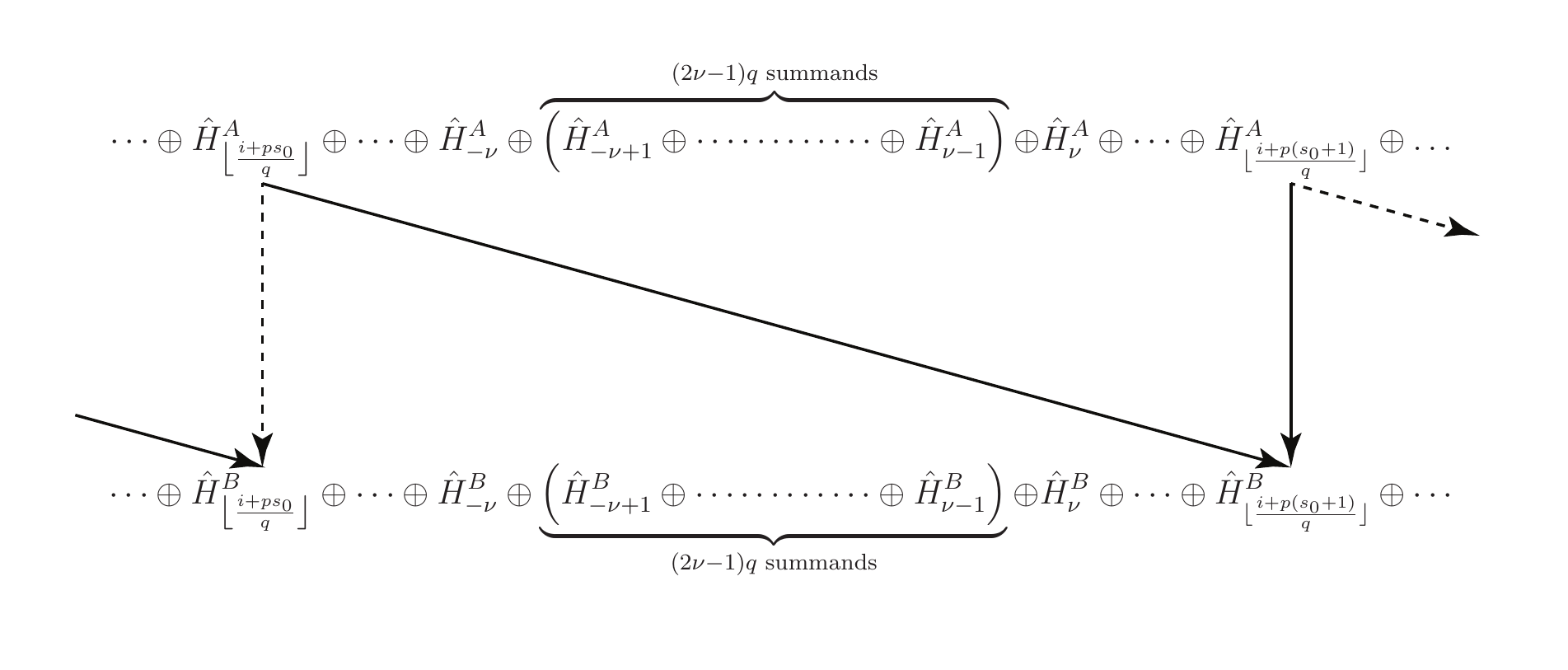}
%%%
%%%
\caption{{\bf Case 1, Subcase ($\beta$)}. There is only one pair of an interacting vertical and horizontal homomorphisms, namely $\hat v_{\left\lfloor \frac{i+p(s_0+1)}{q}\right\rfloor, *}$ and $\hat h_{\left\lfloor \frac{i+ps_0}{q}\right\rfloor, *}$, sharing codomain $\hat H^B_{\lfloor \frac{i+p(s_0+1)}{q} \rfloor }$.}  \label{pic1b}
\end{figure}
%%%
%%%
%%%%%%%%%%%%%%%%%%%%%%%%%%%%%%%%%%%%%%%%%%%%%%%%%%%%%%%%%%%%%%%%%%%%%%%%%%%%%%%%%%%%%%%%%%%%%%%%%%%%%%%%%%%%%%%%%%%%%%%%%%%%%%%%%%%%%%%%%%%%%%%%%%%%%%%%%%%%%%%%%%%%%%%%%%%%%%%%%%%%%%%%%%%%%%%%%%%%%%%%%%%%%%%%%%%%
\vskip1mm
\noindent {\bf Case 2: $\mathbf{\nu>0}$ and $\mathbf {0<p\le(2\nu -1)q}$. } Fix again a spin$^c$-structure $[i]\in\mathbb Z/p \mathbb Z$ and define the integers $s_0$ and $s_1$ as
$$s_0=\max \left\{s\in \mathbb Z \,\big|\, \textstyle \left\lfloor \frac{i+ps}{q}\right\rfloor \le -\nu \right\} 
\quad \text{ and } \quad 
s_1=\min \left\{s\in \mathbb Z \,\big|\, \textstyle\left\lfloor \frac{i+ps}{q}\right\rfloor \ge \nu \right\}. 
$$ 
Since $p, \nu>0$, clearly $s_0<s_1$, and $\lfloor\frac{i+ps}{q} \rfloor > -\nu$ for all $s>s_0$, and similarly $\lfloor\frac{i+ps}{q} \rfloor < \nu$ for $s<s_1$. Thus $\hat v_{\lfloor \frac{i+ps}{q}\rfloor,*}=0$ for $s<s_1$, and $\hat h_{\lfloor \frac{i+ps}{q}\rfloor,*}=0$ for $s>s_0$ (Remark \ref{RemarkAboutNuAndVsAndHs}). It is not hard to see that the condition $0<p\le (2\nu-1)q$ implies that $s_1>s_0+1$. Note that 
\begin{equation} \label{S1MinusS0PlusOne}
s_1-s_0-1 = \# \left\{s\in \mathbb Z\, \big|\, -\nu < \left\lfloor \frac{i+ps}{q} \right\rfloor <\nu \right\} = \sum _{|t|<\nu} \varphi_{[i]}(t).
\end{equation}
Using these, and relying on Figure \ref{pic2} for a visual reference (keeping our conventions from previous figures), it is now easy to compute the ranks of the kernel and cokernel of $(\hat{\mathbb D}_{r,[i]})_*$:
%%%
%%%
\begin{align*}
\text{rk } [\cokerr (\hat{\mathbb D}_{r,[i]})_*] & = s_1-s_0-2,\cr
\text{rk } [\kerr (\hat{\mathbb D}_{r,[i]})_*] & = \sum_{s_0<s<s_1}\left(  \text{rk } H^A_{\left\lfloor \frac{i+ps}{q}\right\rfloor} \right)+  \sum_{s\le s_0 \text{ or } s\ge s_1}\left(  \text{rk } H^A_{\left\lfloor \frac{i+ps}{q}\right\rfloor} -1\right).
\end{align*}
%%%
%%%
Using Theorem \ref{RankOfMappingCone} and equation \eqref{S1MinusS0PlusOne} we now compute the rank of $\widehat{HF}(S_{p/q}^3(K),[i])$:
\begin{align*}
\text{rk } \widehat{HF}& (S^3_{p/q}(K),[i]) = \cr
&=(s_1-s_0-2) +\sum_{s_0<s<s_1}\left(  \text{rk } H^A_{\left\lfloor \frac{i+ps}{q}\right\rfloor} \right)+
\sum_{s\le s_0 \text{ or } s\ge s_1}\left(  \text{rk } H^A_{\left\lfloor \frac{i+ps}{q}\right\rfloor} -1\right) ,\cr
& =2(s_1-s_0-1)-1 +\sum_{s_0<s<s_1}\left(  \text{rk } H^A_{\left\lfloor \frac{i+ps}{q}\right\rfloor} -1  \right)+
\sum_{s\le s_0 \text{ or } s\ge s_1}\left(  \text{rk } H^A_{\left\lfloor \frac{i+ps}{q}\right\rfloor} -1\right) ,\cr
& = -1+2\sum_{|t|<\nu}\varphi_{[i]}(t) +
\sum_{s\in \mathbb Z}\left(  \text{rk } H^A_{\left\lfloor \frac{i+ps}{q}\right\rfloor} -1\right) ,\cr
& = -1+2\sum _{|t|<\nu} \varphi_i(t) + \sum _{t\in \mathbb Z} \varphi_i(t) \left(\text{rk } H^A_t -1\right), \cr
& = -1+2\sum _{|t|<\nu} \varphi_i(t) + \mathcal S_i.
\end{align*}
\begin{figure}[htb!] 
\centering
\includegraphics[width=16cm]{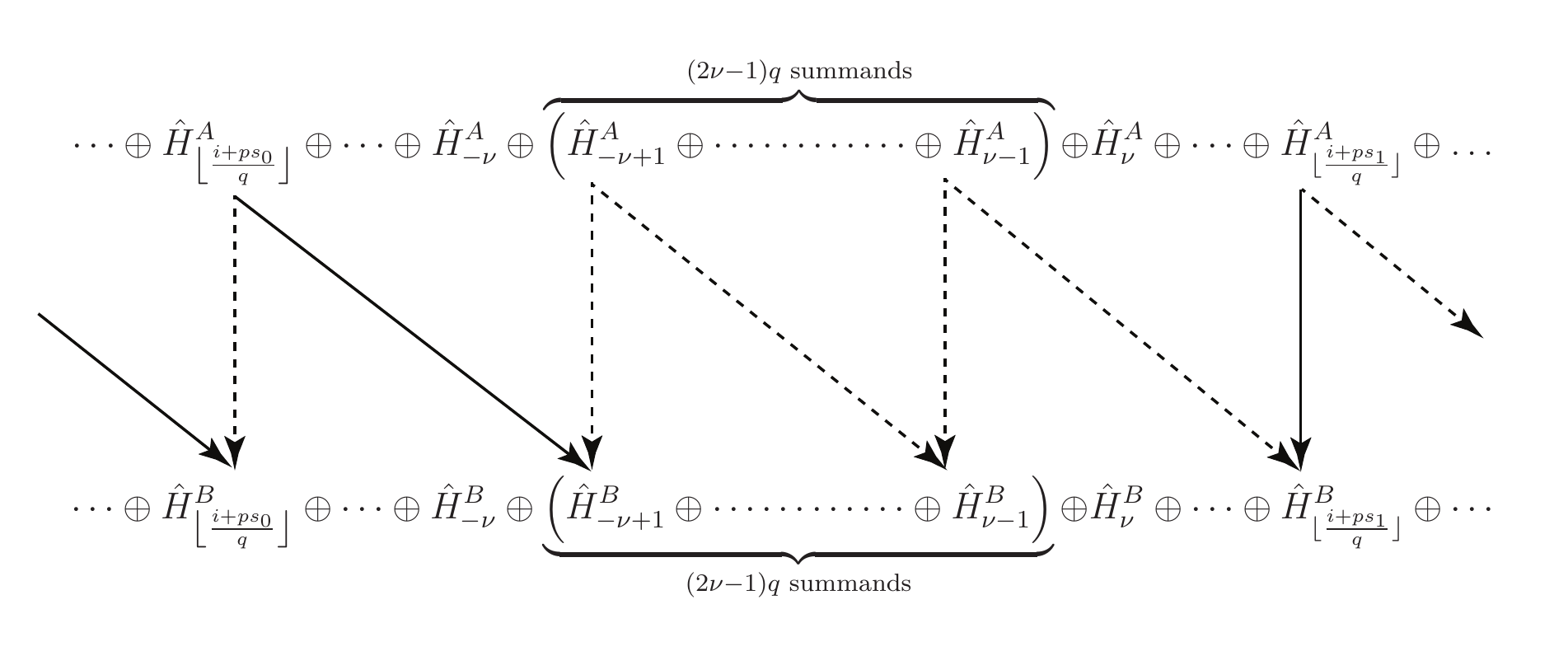}
%%%
%%%
\caption{{\bf Case 2}: There is no interaction between nontrivial vertical and horizontal homomorphisms. Rather, there is an extended region where both types of homomorphisms vanish, contributing to ranks of the kernel and cokernel of $(\hat{\mathbb D}_{r,[i]})_*$.}  \label{pic2}
\end{figure}
%%%%%%%%%%%%%%%%%%%%%%%%%%%%%%%%%%%%%%%%%%%%%%%%%%%%%%%%%%%%%%%%%%%%%%%%%%%%%%%%%%%%%%%%%%%%%%%%%%%%%%%%%%%%%%%%%%%%%%%%%%%%%%%%%%%%%%%%%%%%%%%%%%%%%%%%%%%%%%%%%%%%%%%%%%%%%%%%%%%%%%%%%%%%%%%%%%%%%%%%%%%%%%%%%%%%
\vskip1mm
\noindent {\bf Case 3: $\mathbf{\nu>0}$ and $\mathbf{p<0}$. } Pick a spin$^c$-structure $[i]\in\mathbb Z/p \mathbb Z$ and similarly to the previous case,  define the integers $s_0$ and $s_1$ as
$$s_0=\min \left\{s\in \mathbb Z \,\big|\, \textstyle \left\lfloor \frac{i+ps}{q}\right\rfloor \le -\nu \right\} 
\quad \text{ and } \quad 
s_1=\max \left\{s\in \mathbb Z \,\big|\, \textstyle\left\lfloor \frac{i+ps}{q}\right\rfloor \ge \nu \right\}. 
$$ 
Note that $s_1 < s_0$ and the equality $s_1+1=s_0$ is possible. Since $p$ is now negative we conclude that $\hat v_{\lfloor\frac{i+ps}{q} \rfloor ,* }=0$ for $s>s_1$ while $\hat h_{\lfloor\frac{i+ps}{q} \rfloor , *}=0$ for $s<s_0$. Observe that  
$$ s_0-s_1-1 = \#\left\{s\in \mathbb Z\, \big| \, -\nu < \left\lfloor \frac{i+ps}{q} \right\rfloor <\nu \right\} = \sum_{|t|<\nu}\varphi_{[i]}(t).$$ 
Figure \ref{pic3} gives a visual representation of the situation described above, and aids us in determining the ranks of the kernel and cokernel of $(\hat{\mathbb D}_{r,[i]})_*$:
%%%
%%%
\begin{align*}
\text{rk } [\cokerr (\hat{\mathbb D}_{r,[i]})_*] & = s_0-s_1,\cr
\text{rk } [\kerr (\hat{\mathbb D}_{r,[i]})_*] & = \sum_{s_1<s<s_0}\left(  \text{rk } H^A_{\left\lfloor \frac{i+ps}{q}\right\rfloor} \right)+  \sum_{s\le s_1 \text{ or } s\ge s_0}\left(  \text{rk } H^A_{\left\lfloor \frac{i+ps}{q}\right\rfloor} -1\right).
\end{align*}
%%%
%%%
The rank of $\widehat{HF}(S^3_r(K),[i])$ follows with the help of Theorem \ref{RankOfMappingCone}:
\begin{align*} 
\text{rk } \widehat{HF}(S^3_{p/q}(K),[i]) & =s_0-s_1 +\sum_{s_1<s<s_0}\left(  \text{rk } H^A_{\left\lfloor \frac{i+ps}{q}\right\rfloor} \right)+
\sum_{s\le s_1 \text{ or } s\ge s_0}\left(  \text{rk } H^A_{\left\lfloor \frac{i+ps}{q}\right\rfloor} -1\right) ,\cr
& = 1+2(s_0-s_1-1)+
\sum_{s\in \mathbb Z}\left(  \text{rk } H^A_{\left\lfloor \frac{i+ps}{q}\right\rfloor}  -1\right) ,\cr
& = 1+2\sum _{|t|<\nu} \varphi_i(t) + \sum _{t\in \mathbb Z} \varphi_i(t) \left(\text{rk } H^A_t -1\right), \cr
& = 1+2\sum _{|t|<\nu} \varphi_i(t) + \mathcal S_i.
\end{align*}
%
%%%
%%%
%%%
\begin{figure}[htb!] 
\centering
\includegraphics[width=16cm]{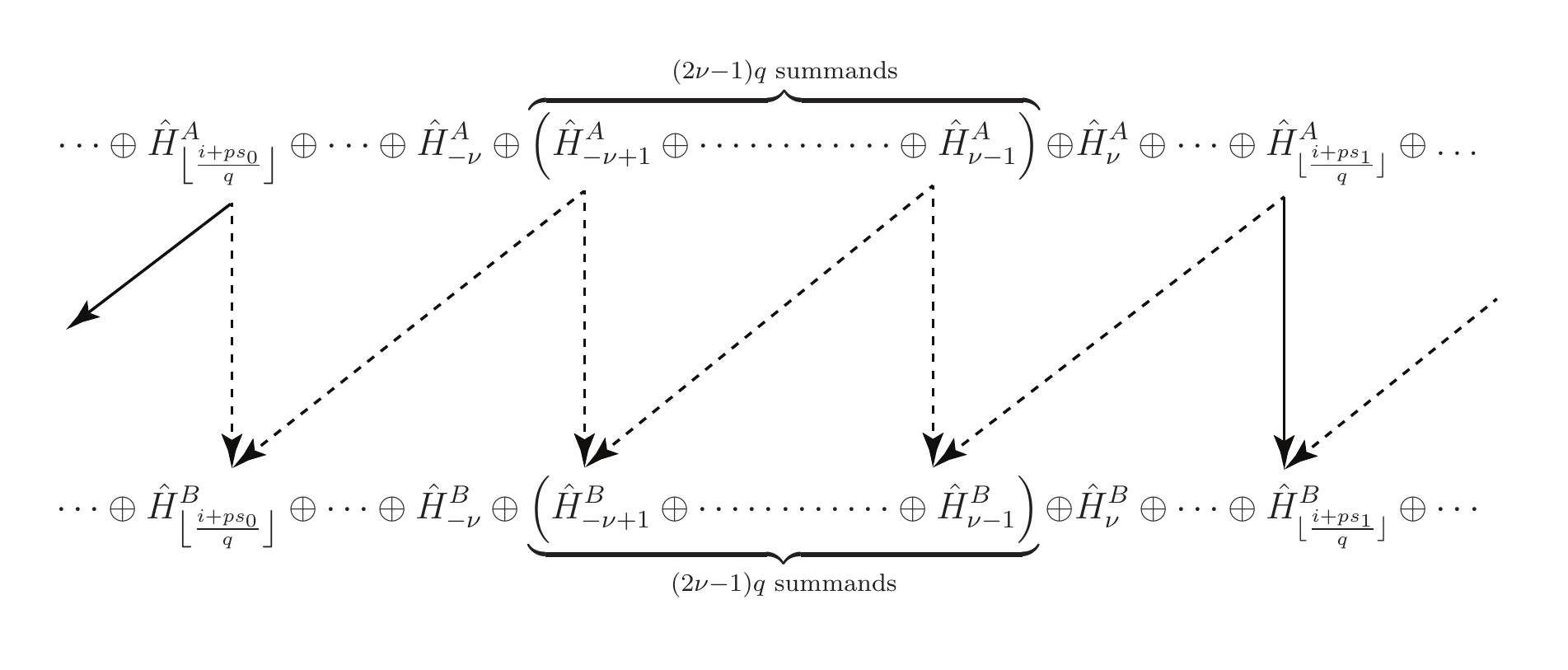}
%%%
%%%
\caption{{\bf Case 3}: Here to, as in Case 2, there is no interaction between nontrivial vertical and horizontal homomorphisms. The same type of region as in Case 2, with vanishing vertical and horizontal homomorphisms, occurs here too, and contributes to the ranks of the kernel and cokernel of $(\hat{\mathbb D}_{r,[i]})_*$..}  \label{pic3}
\end{figure}
%%%%%%%%%%%%%%%%%%%%%%%%%%%%%%%%%%%%%%%%%%%%%%%%%%%%%%%%%%%%%%%%%%%%%%%%%%%%%%%%%%%%%%%%%%%%%%%%%%%%%%%%%%%%%%%%%%%%%%%%%%%%%%%%%%%%%%%%%%%%%%%%%%%%%%%%%%%%%%%%%%%%%%%%%%%%%%%%%%%%%%%%%%%%%%%%%%%%%%%%%%%%%%%%%%%%
\vskip1mm
\noindent {\bf Case 4: $\mathbf{\nu=0}$. } Let $[i]\in \mathbb Z/p\mathbb Z$ be a spin$^c$-structure and define the integers $s_0, s_1 $ as 
$$s_0 =\left\{ 
\begin{array}{cl}
\max \left\{ s \in \mathbb Z \, \big| \, \textstyle \left\lfloor \frac{i+ps}{q}\right\rfloor <0    \right\} &  ; p>0,\cr  & \cr
\min \left\{ s \in \mathbb Z \, \big| \, \textstyle \left\lfloor \frac{i+ps}{q}\right\rfloor <0    \right\} & ;p<0,
\end{array}
\right.
%%%
%%%
\quad  
s_1= \left\{
\begin{array}{cl}
\min \left\{ s \in \mathbb Z \, \big| \, \textstyle \left\lfloor \frac{i+ps}{q}\right\rfloor > 0    \right\} &;p>0, \cr & \cr
 \max \left\{ s \in \mathbb Z \, \big| \, \textstyle \left\lfloor \frac{i+ps}{q}\right\rfloor > 0    \right\} & ; p<0.
\end{array}
\right.
$$
{\bf Subcase $(\gamma)$.} Assume firstly that $p>0$. Then $s_0<s_1$, and $s_1=s_0+1$ is a possibility. Note that $\hat v_{s,*} = 0 $ for $s\le s_0$ and $\hat h_{s,*}=$ for $s_1\le s$. The map 
%
%\begin{equation} \label{RelevantMapForCase2WhenPIsPositive}
$$
\left( \hat h_{\lfloor\frac{i+ps_0}{q} \rfloor ,*} \oplus \dots \oplus \hat v_{\lfloor\frac{i+ps_1}{q} \rfloor ,*} \right):  \bigoplus _{s_0\le s\le s_1} H^A_{\lfloor\frac{i+ps}{q} \rfloor } \longrightarrow \bigoplus_{s_0 < s\le s_1} H^B_{\lfloor\frac{i+ps}{q} \rfloor }
$$
%\end{equation}
%
is onto, regardless of whether $\text{rk }(\hat v_{0,*} + \hat h_{0,*})$ equals $1$ or $2$ (Figure \ref{pic4}), leading to 
%%%
%%%
\begin{align*}
\text{rk } [\cokerr (\hat{\mathbb D}_{r,[i]})_*] & = 0,\cr
\text{rk } [\kerr (\hat{\mathbb D}_{r,[i]})_*] & = \sum_{s_0 \le s \le s_1} \text{rk }H^A_{\left\lfloor\frac{i+ps}{q}\right\rfloor}  - (s_1-s_0) + \sum_{s<s_0 \text{ or } s>s_1} \left( \text{rk }H^A_{\left\lfloor\frac{i+ps}{q}\right\rfloor} -1\right),
\end{align*}
%%%
%%%
from which %these a computation of the rank of $\widehat{HF}(S^3_r(K),[i])$ easily follows:
\begin{align} \nonumber
\text{rk } \widehat{HF}(S^3_{p/q}(K),[i]) & = \sum_{s_0 \le s \le s_1} \text{rk }H^A_{\left\lfloor\frac{i+ps}{q}\right\rfloor}  - (s_1-s_0)+ \sum_{s<s_0 \text{ or } s>s_1} \left( \text{rk }H^A_{\left\lfloor\frac{i+ps}{q}\right\rfloor} -1\right), \cr
 & =1+ \sum_{s\in \mathbb Z} \left( \text{rk }H^A_{\left\lfloor\frac{i+ps}{q}\right\rfloor} -1\right) , \cr
& = 1 +\sum _{t\in \mathbb Z} \varphi _{[i]}(t) \left( H_*(\hat A_t) -1\right), \cr
& = 1+\mathcal S_{[i]},
\end{align}  
follows.
%%%
%%%
\begin{figure}[htb!] 
\centering
\includegraphics[width=14cm]{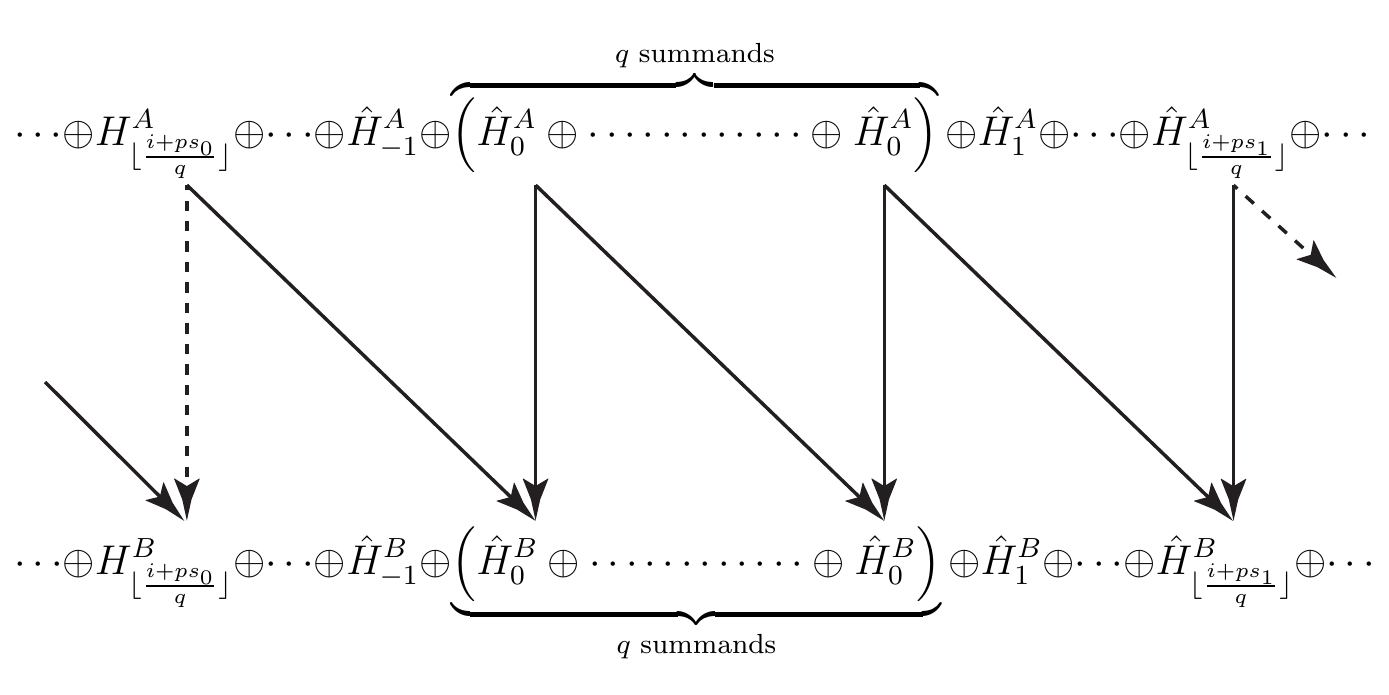}
%%%
%%%
\caption{{\bf Case 4, Subcase $(\gamma)$}: Here  $p>0$ and the rank of $(\hat v_{0,*}+\hat h_{0,*})$ may be 1 or 2. Regardless, the zig-zag region of interacting nontrivial vertical and horizontal homomorphisms is surjective. The example above has $s_1=s_0+3$. }  \label{pic4}
\end{figure}
%%%
%%%
\vskip3mm
Before proceeding to the next subcase, observe that in the event that $p<0$, the condition $0\in \{\lfloor \frac{i+ps}{q}\rfloor \,|\,s\in \mathbb Z\}$ is equivalent to the equality $s_0>s_1+1$. The inequality $s_0\ge s_1+1$ holds always, with equality occurring if and only if $0\notin \{\lfloor \frac{i+ps}{q}\rfloor \,|\,s\in \mathbb Z\}$. Also note that $\hat v_{s,*}=0$ for $s_0\le s$ and $\hat h_{s,*} =0$ for $s\le s_1$.
\vskip1mm
\noindent{\bf Subcase $(\delta)$.} Assume that $p<0$ and that $\text{rk }(\hat v_{0,*} + \hat h_{0,*}) =1$ (see Figure \ref{pic5b} if $s_0>s_1+1$, or Figure \ref{pic5c} if $s_0=s_1+1$), or that $\text{rk }(\hat v_{0,*} + \hat h_{0,*}) =2$ with $s_0=s_1+1$ (Figure \ref{pic5c}). In either case, the map 
\begin{equation} \label{pNegativeMap}
\left( \hat h_{\lfloor\frac{i+p(s_1+1)}{q} \rfloor ,*} \oplus \dots \oplus \hat v_{\lfloor\frac{i+p(s_0-1)}{q} \rfloor ,*} \right):  \bigoplus _{s_1 < s < s_0} H^A_{\lfloor\frac{i+ps}{q} \rfloor } \longrightarrow \bigoplus_{s_1 < s \le s_0} H^B_{\lfloor\frac{i+ps}{q} \rfloor }.
\end{equation}
has cokernel of rank 1, leading to 
%%%
%%%
\begin{align*}
\text{rk } & [\cokerr (\hat{\mathbb D}_{r,[i]})_*]  = 1,\cr
\text{rk } & [\kerr (\hat{\mathbb D}_{r,[i]})_*]  = \left(  \sum_{s_1<  s < s_0} \text{rk }H^A_{\left\lfloor\frac{i+ps}{q}\right\rfloor}  - (s_1-s_0-1)\right) + \sum_{s\le s_1 \text{ or } s\ge s_0} \left( \text{rk }H^A_{\left\lfloor\frac{i+ps}{q}\right\rfloor} -1\right).
\end{align*}
%%%
%%%
Adding these two ranks gives $\text{rk }\widehat{HF}(S^3_r(K),[i]) = 1+\mathcal S_{[i]}$. 
%%%
%%%
\begin{figure}[htb!] 
\centering
\includegraphics[width=14cm]{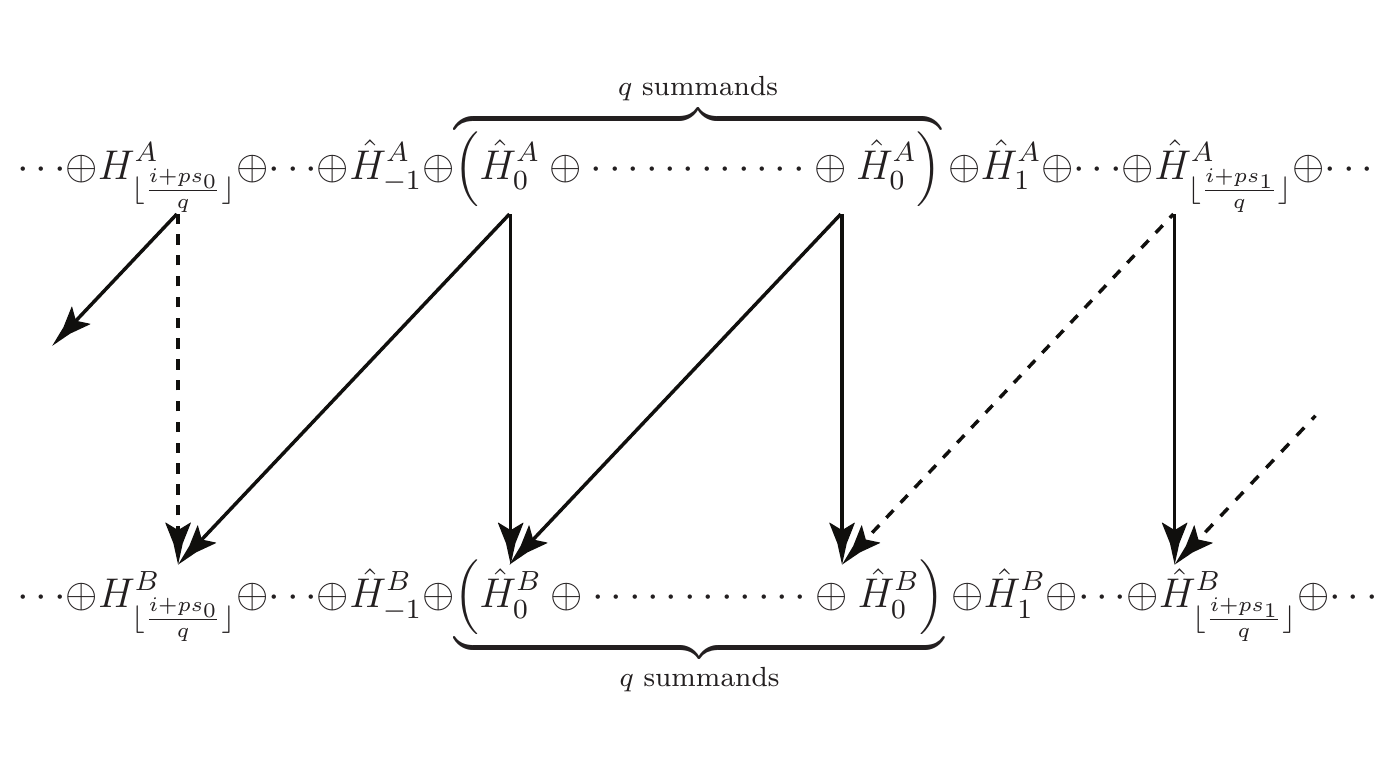}
%%%
%%%
\caption{{\bf Case 4, Subcase $(\delta)$}: This diagram illustrates the case of $p<0$ and $s_0>s_1+1$. The diagram can be used for both cases of $\text{rk }(\hat v_{0,*} + \hat h_{0,*})$ being 1 or 2. }  \label{pic5b}
\end{figure}

%%%
%%%
\begin{figure}[htb!] 
\centering
\includegraphics[width=14cm]{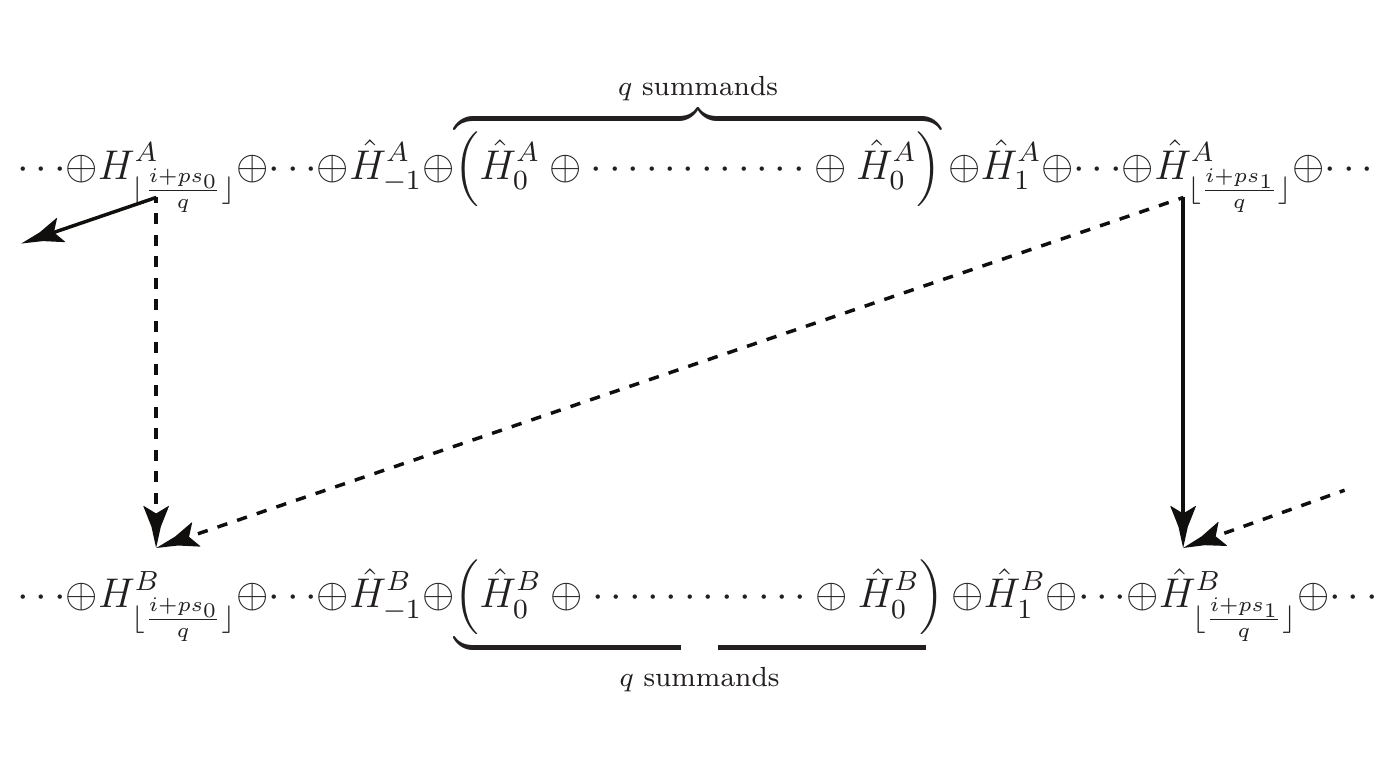}
%%%
%%%
\caption{{\bf Case 4:} This diagram illustrates the case of $p<0$ and $s_0=s_1+1$. The diagram can be used for both cases of $\text{rk }(\hat v_{0,*} + \hat h_{0,*})$ being 1 or 2. }  \label{pic5c}
\end{figure}
%%%
%%%

{\bf Subcase $(\epsilon)$.} Assume that $p<0$ and that $\text{rk }(\hat v_{0,*}+\hat h_{0,*}) =2$ with $s_0>s_1+1$. Then the homomorphism \eqref{pNegativeMap} (compare Figure \ref{pic5b}) is surjective, leading to 
%%%
%%%
\begin{align*}
\text{rk } [\cokerr (\hat{\mathbb D}_{r,[i]})_*] & = 0,\cr
\text{rk } [\kerr (\hat{\mathbb D}_{r,[i]})_*] & = \left( \sum_{s_1<  s < s_0} \text{rk }H^A_{\left\lfloor\frac{i+ps}{q}\right\rfloor}  - (s_1-s_0) \right) + \sum_{s\le s_1 \text{ or } s\ge s_0} \left( \text{rk }H^A_{\left\lfloor\frac{i+ps}{q}\right\rfloor} -1\right),
\end{align*}
%%%
%%%
from which 
\begin{align} \nonumber
\text{rk } \widehat{HF}(S^3_{p/q}(K),[i]) & =\sum_{s_1<s<s_0} \text{rk }H^A_{\left\lfloor\frac{i+ps}{q}\right\rfloor}  - (s_0-s_1)+ \sum_{s\ge s_0 \text{ or } s\le s_1} \left( \text{rk }H^A_{\left\lfloor\frac{i+ps}{q}\right\rfloor} -1\right), \cr
 & =-1+ \sum_{s\in \mathbb Z} \left( \text{rk }H^A_{\left\lfloor\frac{i+ps}{q}\right\rfloor} -1\right) , \cr
& = -1 +\sum _{t\in \mathbb Z} \varphi _{[i]}(t) \left( H_*(\hat A_t) -1\right), \cr
& = -1+\mathcal S_{[i]}. 
\end{align}  
follows. This completes the proof of Theorem \ref{main}. \hfill \qed
 %%%%%%%%%%%%%%%%%%%%%%%%%%%%%%%%%%%%%%%%%%%%%%%%%%%%%%%%%%%%%%%%%%%%%%%%%%%%%%%%%%%%%%%%%%%%%%%%%%%%%%%%%%%%%%%%%%%%%%%%%%%%%%%%%%%%%%%%%%%%%%%%%%%%%%%%%%%%%%%%%%%%%%%%%%%%%%%%%%%%%%%%%%%%%%%%%%%%%%%%%%%%%%%%%%%%%%%%%%%%%%%%%%%%%%%%%%%%%%%%%%%%%%%%%%%%%%%%%%%%%%%%%
\subsection{Proof of Theorem \ref{Estimate}}
%%%
%%%
Without loss of generality we may assume that $|p|>1$ as in the case of $p=\pm 1$, Theorem \ref{Estimate} is trivially true. Recall that $\mathcal R$ is the set of different ranks of the groups $\widehat{HF}(S^3_r(K),[i])$, $[i]\in \mathbb Z/p\mathbb Z$, and where $r=p/q$ (with $p,q$ relatively prime nonzero integers with $q$ positive), and that Theorem \ref{Estimate} claims that $g+1$ is an upper bound for the cardinality of $\mathcal R$. Remark \ref{RemarkAboutBoundOfSpinCStructures} points out that $|\mathcal R|$
is bounded from above by $\frac{|p|+1}{2}$ if $p$ is odd, and is bounded by $\frac{|p|+2}{2}$ is $p$ is even. Therefore it suffices to prove Theorem \ref{Estimate} under the added restriction of $|p|\ge 2g+1$, which we assume throughout this section. 

The upper bound $g+1$ of $|\mathcal R|$ shall be obtained by studying the jumps in ranks as the spin$^c$-structure $[i]$ is changed to $[i+1]$ for $i=1,\dots,|p|-1$. Toward that goal, we introduce the integers $\rho_{[i]}$ as
\begin{equation} \label{DefinitionOfRho}
\rho_{[i]} = \text{rk }\widehat{HF}(S^3_r(K),[i]) - \text{rk }\widehat{HF}(S^3_r(K),[i-1]), 
\end{equation} 
for $[i]\in \mathbb Z/p\mathbb Z - \{0\}$. 

Going forward, it shall prove advantageous to represent spin$^c$-structures from $\mathbb Z/p\mathbb Z$ by their unique representative $i\in \{0,\dots,|p|-1\}$. To underscore this, we shall write $\mathcal S_i$, $\varphi_i(s)$, and $\rho_i$ instead of $\mathcal S_{[i]}, \varphi_{[i]}(s)$ and $\rho_i$ respectively. We shall also assume, without loss of generality, that $K$ is a knot with $\nu(K) \ge \nu(-K)$, so as to exclude the case of $\nu=0$, $p<0$ and $0\in \{ \lfloor \frac{i+ps}{q}\rfloor \, |\,s\in \mathbb Z\}$ from Theorem \ref{main}. We adopt these conventions for the remainder of the present section.  

The proof of Theorem \ref{Estimate} is broken up into a series of lemmas. 

%%%
%%%
\begin{lemma} \label{LemmaAux1}
For $q,\mu>0$ and nonzero $p$, write 
$$(2\mu-1)q = np +r, \quad \text{ and } \quad (\mu-1)q = kp+t,  \quad \text{ with } n,k\in \mathbb Z\text{ and } r,t \in \{0,...,|p|-1\}.$$
If $r>0$ then
$$\sum_{|s|<\mu} \varphi_{[i]}(s) = \left\{
\begin{array}{ll}
\left\lfloor \frac{(2\mu-1)q}{|p|} \right\rfloor + 1 &\quad ; \quad  i\equiv p-t+j \, (\text{mod } p ), \mbox{ for } j=0,\dots,r-1, \cr
& \cr
\left\lfloor \frac{(2\mu-1)q}{|p|} \right\rfloor  &\quad ; \quad  \text{otherwise}. 
\end{array}
\right.
$$
If $r=0$ then $\sum_{|s|<\mu} \varphi_{[i]}(s) = \left\lfloor \frac{(2\mu-1)q}{|p|} \right\rfloor $ for all $[i]\in \mathbb Z/p\mathbb Z$.
\end{lemma}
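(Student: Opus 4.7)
The plan is to reinterpret $\sum_{|s|<\mu}\varphi_{[i]}(s)$ as the number of lattice points of the arithmetic progression $i + p\mathbb Z$ lying in an explicit interval, and then to combine Euclidean division with the congruence $q \equiv r - 2t \pmod{p}$ to pinpoint the exceptional residue classes. Unpacking \eqref{CombinatorialInput1}, the condition $\lfloor (i+pn)/q\rfloor = s$ is equivalent to $sq \le i + pn < (s+1)q$, so summing over $s \in \{-\mu+1,\dots,\mu-1\}$ yields
\[
\sum_{|s|<\mu}\varphi_{[i]}(s) \;=\; \#\bigl((i + p\mathbb Z) \cap I\bigr), \qquad I := [(-\mu+1)q,\, \mu q),
\]
an interval of length $L := (2\mu-1)q$. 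Since $i + p\mathbb Z$ has common difference $|p|$, any interval of length exactly $|p|$ meets it in exactly one point.

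Next, I would partition $I$ into $\lfloor L/|p|\rfloor$ consecutive subintervals of length $|p|$ together with a tail $J := [\mu q - r,\, \mu q)$ of length $r$; the identity $L = \lfloor L/|p|\rfloor \cdot |p| + r$, following from $(2\mu-1)q = np + r$, makes this legitimate for either sign of $p$. Each full subinterval contributes exactly one element of $i + p\mathbb Z$ regardless of $[i]$, producing a baseline count of $\lfloor L/|p|\rfloor$. The tail $J$ contributes one further element exactly when $[i]$ lies in the set of $r$ consecutive residues $R := \{\mu q - r,\dots,\mu q - 1\} \pmod{|p|}$, and contributes zero otherwise; this already handles the $r = 0$ case, in which the tail is empty and the count is $\lfloor L/|p|\rfloor$ for every $[i]$.

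Finally, I would identify $R$ with $\{p - t + j : j = 0,\dots,r-1\}$ modulo $p$ by leveraging both Euclidean divisions in the hypothesis. Subtracting twice $(\mu-1)q = kp + t$ from $(2\mu-1)q = np + r$ gives $q = (n - 2k)p + (r - 2t)$, so $q \equiv r - 2t \pmod{p}$. Combined with $\mu q = (\mu - 1)q + q \equiv t + q \pmod{p}$ this yields $\mu q - r \equiv t + q - r \equiv -t \equiv p - t \pmod{p}$, so the $r$ consecutive residues in $R$ are exactly $\{p - t + j : j = 0,\dots,r-1\}$, as claimed. That these $r$ residues are distinct modulo $|p|$ is automatic from $r < |p|$.

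The main obstacle I anticipate is not any single deep step but rather bookkeeping: carrying the congruences and signs carefully enough (especially for $p < 0$, where $n \le 0$) so that the formula, which is expressed uniformly in terms of $|p|$, $r$, and $t$, falls out cleanly. The clean decomposition $L = \lfloor L/|p|\rfloor\cdot|p| + r$ and the modular computation above are designed precisely to avoid any sign-dependent casework.
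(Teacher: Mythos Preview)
Your proposal is correct and follows essentially the same approach as the paper: both reinterpret $\sum_{|s|<\mu}\varphi_{[i]}(s)$ as the number of points of the progression $i+p\mathbb Z$ in the interval $[(-\mu+1)q,\mu q)$ of length $(2\mu-1)q$, then observe that this count is $\lfloor(2\mu-1)q/|p|\rfloor$ or one more, with the excess occurring for residues matching the first (equivalently last) $r$ elements of the interval. Your write-up is more explicit than the paper's in actually verifying that these residues are $\{p-t+j:j=0,\dots,r-1\}$; incidentally, the detour through $q\equiv r-2t\pmod p$ can be shortened to $\mu q - r = (2\mu-1)q - (\mu-1)q - r = (n-k)p - t \equiv -t\pmod p$ directly.
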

%%%
%%%
\begin{proof}
By definition of $\varphi _i(s)$, we find that 
$$\sum _{|s|<\mu}\varphi _i(s) = \text{Cardinality of the set}  \left\{ s\in \mathbb Z \, \big|\, -\mu+1\le  \left\lfloor \frac{i+ps}{q} \right\rfloor \le \mu -1\right\}.  $$
The double inequality on the right-hand side above prompts us to count integers $s$ with $-q(\mu-1) \le i+ps < \mu q$. We view this as the problem of counting the number of \lq\lq steps\rq\rq that land in the set $\mathcal I = \{-(\mu-1)q,...,\mu q -1\}$ when \lq\lq walking\rq\rq through the integers by taking steps of length $p$ and making sure that one step lands on $i$ (we don't need $i\in \mathcal I$). Putting the problem this way, and since $|\mathcal I|=(2\mu-1)q$, it should be clear that the answer is either $\lfloor \frac{(2\mu-1)q}{|p|}\rfloor$ or $\lfloor \frac{(2\mu-1)q}{|p|}\rfloor + 1$, with the latter occurring for those values of $i$ that are congruent mod $p$ to the first $r$ values (or last $r$ values) of $\mathcal I$. If $r=0$, no such $i$ exists, thereby proving the lemma.
\end{proof}
%%%
%%%
\begin{lemma} \label{LemmaAux2}
Let $p,q$ be two relatively prime, nonzero integers with $q>0$ and $|p|>1$.
\begin{itemize}
\item[(i)] For $i\in \{1,\dots,|p|-1\}$, we obtain 
$$
\varphi_{i}(s)  - \varphi _{i-1}(s) = \left\{
\begin{array}{rl}
1 & \quad ; \quad i \equiv sq \,\, (\text{mod } p), \cr
-1 & \quad ; \quad i \equiv (s+1)q \, \,(\text{mod } p), \cr
0 & \quad ;\quad \text{otherwise.}
\end{array}
\right.
$$
The two congruences $i \equiv sq \, (\text{mod } p)$ and $i \equiv (s+1)q \, (\text{mod } p)$ are mutually exclusive.  
%%%
%%%
\item[(ii)] Let $g$ be a positive integer such that $|p|\ge 2g+1$,  and let $\mu$ be an integer with $0<\mu \le g$. Then for any $i\in \{1,\dots,|p|-1\}$ we obtain
$$
\sum _{|s|<\mu} \varphi_i(s) - \sum _{|s|<\mu} \varphi_{i-1}(s) = \left\{
\begin{array}{rl}
1 & \quad ;\quad i \equiv (-\mu+1)q \, \,(\text{mod }p), \cr
-1 & \quad ; \quad i \equiv \mu q\,\, (\text{mod }p), \cr
0 & \quad ; \quad \text{otherwise}.
\end{array}
\right.
$$

\end{itemize}
\end{lemma}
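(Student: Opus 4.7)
The plan is to prove (i) directly from the definition of $\varphi_i(s)$ as an interval-count, and then deduce (ii) from (i) by telescoping.

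For (i), definition \eqref{CombinatorialInput1} says $\varphi_i(s)$ counts the integers $n$ satisfying $sq \le i+pn < (s+1)q$, equivalently the multiples of $p$ lying in the half-open interval $I_i := [sq-i,\, (s+1)q-i)$. Replacing $i$ by $i-1$ shifts this interval one unit to the right, so $I_{i-1} = I_i + 1$. The two intervals have the same length $q$ and differ only at their endpoints: $I_i \setminus I_{i-1} = \{sq-i\}$ and $I_{i-1} \setminus I_i = \{(s+1)q-i\}$. Counting multiples of $p$ in each singleton gives
$$
\varphi_i(s) - \varphi_{i-1}(s) = [\,p \mid sq-i\,] - [\,p \mid (s+1)q-i\,],
$$
which is precisely the three-case formula in (i). The two congruences $i \equiv sq$ and $i \equiv (s+1)q \,(\text{mod } p)$ cannot hold simultaneously: their difference would force $p \mid q$, contradicting $\gcd(p,q)=1$ together with $|p|>1$.

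For (ii), apply (i) termwise. Writing $A_s := [\,i \equiv sq \,\,(\text{mod } p)\,]$, part (i) reads $\varphi_i(s) - \varphi_{i-1}(s) = A_s - A_{s+1}$. Summing over $|s|<\mu$ telescopes:
$$
\sum_{|s|<\mu}\bigl(\varphi_i(s) - \varphi_{i-1}(s)\bigr) \;=\; \sum_{s=-\mu+1}^{\mu-1} (A_s - A_{s+1}) \;=\; A_{-\mu+1} - A_{\mu},
$$
which is exactly the advertised $+1$/$-1$/$0$ trichotomy. The three cases are mutually exclusive: were $A_{-\mu+1} = A_{\mu} = 1$ to occur together, we would have $p \mid (2\mu-1)q$, and then $\gcd(p,q)=1$ would force $|p| \le 2\mu-1 \le 2g-1$, contradicting the hypothesis $|p| \ge 2g+1$.

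The only real point of care is noticing the telescoping structure in (ii) and pinning down exactly where the hypothesis $|p| \ge 2g+1$ is used, namely to prevent the two surviving boundary terms $A_{-\mu+1}$ and $A_\mu$ from coinciding. Everything else is routine interval-counting.
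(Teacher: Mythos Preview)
Your proof is correct. Part (i) is essentially the paper's argument in slightly different packaging: the paper divides through by $p$ and counts integers $t$ in an interval of length $q/|p|$, treating $p>0$ and $p<0$ separately, whereas you count multiples of $p$ in an integer interval of length $q$, which handles both signs at once. These are equivalent reformulations of the same endpoint comparison.

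Part (ii), however, is a genuine simplification. The paper does \emph{not} deduce (ii) from (i); instead it repeats the interval-counting argument from scratch, observing that $\sum_{|s|<\mu}\varphi_i(s)$ itself counts integers in a single longer interval and then compares endpoints as in (i). Your telescoping argument $\sum_{s=-\mu+1}^{\mu-1}(A_s - A_{s+1}) = A_{-\mu+1} - A_\mu$ is cleaner: it reuses (i) directly and makes transparent why only the two boundary congruences survive. Both approaches invoke the hypothesis $|p|\ge 2g+1$ at exactly the same place, to rule out $p\mid (2\mu-1)$, so nothing is lost. The telescoping route is the one worth keeping.
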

%%%
%%%
\begin{proof}
{\em Case (i).}  Assume firstly that $p>0$. Then the value of $\varphi_i(s)$ is obtained by counting integers $t$ in the range
\begin{equation}\label{FirstCountOfIntegers}
\frac{sq-i}{p} \le t < \frac{(s+1)q-i}{p}.
\end{equation}
Similarly, the value of $\varphi _{i-1}$ is the count of integers $t$ in the range
\begin{equation} \label{SecondCountOfIntegers}
\frac{sq-i+1}{p} \le t < \frac{(s+1)q-i+1}{p}.
\end{equation}
The difference $\varphi_i(s) - \varphi _{i-1}(s)$ can only be nonzero if there are integers $t$ in the range \eqref{FirstCountOfIntegers} that do not occur in the range \eqref{SecondCountOfIntegers}, or vice versa. 

Specifically, any integer $t$ appearing in  \eqref{FirstCountOfIntegers} but not in \eqref{SecondCountOfIntegers}, must satisfy the double inequality $\frac{sq-i}{p} \le t < \frac{sq-i+1}{p}$. Multiplying his by $p$ yields the double inequality of integers $sq-i \le pt < sq-i+1$, with the left-most and right-most entry differing by 1. The only possible solution is $sq-i = pt$, or equivalently, $i\equiv sq \, (\text{mod } p)$. 

Similarly, any integer $t$ appearing in \eqref{SecondCountOfIntegers} but not in \eqref{FirstCountOfIntegers}, must satisfy the double inequality $\frac{(s+1)q-i}{p} \le t < \frac{(s+1)q-i+1}{p}$. Multiplying his by $p$ yields again a double inequality of integers $(s+1)q-i \le pt < (s+1)q-i+1$, with the left-most and right-most entry differing by 1. This again leads to an equality, namely $(s+1)q-i = pt$, or equivalently, $i\equiv (s+1)q \, (\text{mod } p)$. 

It is not possible for there to be an integer $t$ appearing in \eqref{FirstCountOfIntegers} but not in \eqref{SecondCountOfIntegers}, and another integer $t$ appearing in \eqref{SecondCountOfIntegers} but not in \eqref{FirstCountOfIntegers}, for such integers would force the congruence $(s+1)q\equiv sq \, (\text{mod }p)$, leading to $p|q$, and hence to $|p|=1$, contrary to assumption. 

Lastly, the existence of an integer $t$ in the range \eqref{FirstCountOfIntegers} that isn't in the range  \eqref{SecondCountOfIntegers} implies $\varphi_i(s) - \varphi_{i+1}(s) = 1$. Similarly, the existence of an integer $t$ in the range \eqref{SecondCountOfIntegers} not occurring in the range  \eqref{FirstCountOfIntegers} yields $\varphi_i(s) - \varphi_{i+1}(s) = -1$, proving Case (i) of the lemma for $p>0$. The case of $p<0$ is handled verbatim.
\vskip2mm
%%%
%%%
{\em Case (ii).}  Assume firstly again that $p>0$. Then the sum $\sum _{|s|<\mu} \varphi_i(s)$ is the count of integers $t$ in the range 
$$ \frac{(-\mu +1)q-i}{p} \le t<\frac{\mu q -i}{p},$$
while the sum $\sum _{|s|<\mu} \varphi_{i-1}(s)$ is the count of integers $t$ in the range
$$  \frac{(-\mu +1)q-i+1}{p} \le t<\frac{\mu q -i+1}{p}.$$
The difference $\sum _{|s|<\mu} \varphi_i(s) - \sum _{|s|<\mu} \varphi_{i-1}(s)$ is therefore the count of integers $t$ in the range $\frac{(-\mu +1)q-i}{p} \le t <  \frac{(-\mu +1)q-i+1}{p}$, minus the count of integers $t'$ in the range $\frac{\mu q -i}{p} \le t' < \frac{\mu q -i+1}{p}$. Multiplying the first of these double inequalities by $p$, yields the double inequality of integers $(-\mu+1)q-i \le pt < (-\mu+1)q-i+1$, with the left-most and right-most sides differing by 1. This forces $pt = (-mu+1)q-i$, showing that there is a unique integer $t$ precisely when $i\equiv (-\mu+1)q\,\, (\text{mod }p)$.   The other double inequality similarly leads to a unique integer solution $t'$ with $pt' = \mu q-i$, occurring precisely when $i \equiv \mu q \,\,(\text{mod }p)$. Both of these congruences cannot occur simultaneously, for if they did they would imply $(2\mu -1)q\equiv 0\, (\text{mod }p)$, forcing $2\mu-1\equiv 0 \,(\text{mod }p)$.  Since $0<\mu\le g$ and $p\ge 2g+1$, this latter congruence is not possible. The case of $p<0$ follows analogously. 
\end{proof}
Going forward we shall rely on the integers $\bar q, \bar p$, with $\bar q \in \{1,\dots, |p|-1\}$, determined by the equation 
\begin{equation} \label{DefinitionOfQbarAndPbar}
q\bar q + p\bar p =1. 
\end{equation}
%%%
%%%
\begin{lemma} \label{LemmaAux3}
Let $p,q$ be relatively prime, nonzero integers with $q>0$, and let $\bar q, \bar p$ be as in \eqref{DefinitionOfQbarAndPbar}. 
\begin{itemize}
\item[(i)] If $\nu = 0$ or if $\nu>0$ and $(2\nu-1)q\le p$, then for any $i\in \{1,\dots,|p|-1\}$, we obtain
\begin{equation} \label{FormulaForRhoCasei}
\rho_i= \sum _{k\in \mathbb Z} \left( \text{rk }\hat H^A_{i\bar q +pk} \, -  \text{rk } \hat H^A_{i\bar q -1+pk}\right).
\end{equation}
%
%%%
%%%
\item[(ii)] Assume that $|p|\ge 2g+1$, that $\nu >0$ and either $p<0$ or $0<p\le (2\nu -1)q$. Then  
\begin{equation} \label{FormulaForRhoCaseii}
\rho _i = \left\{
\begin{array}{rl}
1 + \sum _{k\in \mathbb Z} \left( \text{rk }\hat H^A_{i\bar q +pk} \, -  \text{rk } \hat H^A_{i\bar q -1+pk}\right) & \quad ; \quad  i\equiv (-\nu+1)q \, (\text{mod } p), \cr 
-1 + \sum _{k\in \mathbb Z} \left( \text{rk }\hat H^A_{i\bar q +pk} \, -  \text{rk } \hat H^A_{i\bar q -1+pk}\right) & \quad ; \quad   i\equiv \nu q \, (\text{mod } p), \cr 
\sum _{k\in \mathbb Z} \left( \text{rk }\hat H^A_{i\bar q +pk} \, -  \text{rk } \hat H^A_{i\bar q -1+pk}\right) & \quad ; \quad   \text{otherwise}.
\end{array}
\right.
\end{equation}
\end{itemize}
In both cases, if $i,j \not \equiv q \, (\text{mod }p)$, then  $\rho_j = - \rho_{i}$ for $j \equiv q- i \, (\text{mod } p)$. 
\end{lemma}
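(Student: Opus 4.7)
The plan is to extract the rank of $\widehat{HF}(S^3_r(K),[i])$ from Theorem \ref{main}, form the difference $\rho_i$ of \eqref{DefinitionOfRho}, and reduce the resulting expressions via Lemmas \ref{LemmaAux1} and \ref{LemmaAux2}. The standing convention $\nu(K)\ge \nu(-K)$ rules out the anomalous $\nu=0$ subcase of Theorem \ref{main}, so across the cases of interest the rank formula takes the uniform shape $c + 2\sum_{|s|<\nu}\varphi_{[i]}(s) + \mathcal{S}_{[i]}$ with $c$ a constant independent of $[i]$, and with the middle term present only under the hypotheses of part (ii).

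For case (i) the middle term vanishes, so $\rho_i = \mathcal{S}_i - \mathcal{S}_{i-1} = \sum_s(\varphi_i(s)-\varphi_{i-1}(s))(\text{rk}\,\hat H^A_s - 1)$. Lemma \ref{LemmaAux2}(i) pinpoints the two arithmetic progressions of $s$-values on which this coefficient is nonzero: the coefficient equals $+1$ when $sq\equiv i \pmod p$ and $-1$ when $(s+1)q\equiv i \pmod p$, with the two congruences mutually exclusive. Multiplying each congruence through by $\bar q$ and using $q\bar q \equiv 1 \pmod p$ (read off from \eqref{DefinitionOfQbarAndPbar}) recasts them as $s = i\bar q + pk$ and $s = i\bar q - 1 + pk$ for $k\in\mathbb Z$. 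Substituting and letting the constant $-1$'s pair up and cancel between the two progressions leaves precisely \eqref{FormulaForRhoCasei}.

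For case (ii) the middle term contributes $2\bigl(\sum_{|s|<\nu}\varphi_i(s) - \sum_{|s|<\nu}\varphi_{i-1}(s)\bigr)$ to $\rho_i$. The hypotheses $|p|\ge 2g+1$ and $0 < \nu \le g$ are exactly the assumptions of Lemma \ref{LemmaAux2}(ii) with $\mu=\nu$, which yields the constant correction stated in \eqref{FormulaForRhoCaseii}, localized at the two residue classes $i\equiv (-\nu+1)q\pmod p$ and $i\equiv \nu q \pmod p$; the lemma also certifies that these two classes are distinct. The remaining contribution $\mathcal{S}_i - \mathcal{S}_{i-1}$ is handled exactly as in case (i).

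For the final symmetry $\rho_j = -\rho_i$, note that $j\equiv q-i \pmod p$ together with $q\bar q\equiv 1 \pmod p$ gives $j\bar q\equiv 1-i\bar q \pmod p$, so the arithmetic progressions $\{j\bar q + pk\}_k$ and $\{j\bar q - 1 + pk\}_k$ coincide as subsets of $\mathbb Z$ with $\{1 - i\bar q + pk\}_k$ and $\{-i\bar q + pk\}_k$. Invoking the symmetry $\text{rk}\,\hat H^A_s = \text{rk}\,\hat H^A_{-s}$, a consequence of the conjugation isomorphism $J\colon C\{j\le 0\}\to C\{i\le 0\}$ underlying the definition of $\hat h_s$, together with the substitution $k\mapsto -k$, converts the sum defining $\rho_j$ into the negative of the sum defining $\rho_i$. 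The caveat $i,j\not\equiv q \pmod p$ ensures both $i$ and $j$ live inside the admissible range $\{1,\dots,|p|-1\}$. In case (ii), the involution $i\mapsto q-i$ interchanges the two exceptional classes $(-\nu+1)q$ and $\nu q$, so the $+1$ and $-1$ corrections swap and the sign change is preserved. The main technical nuisance I anticipate is the case (ii) bookkeeping: verifying that $|p|\ge 2g+1$ is precisely what is required for Lemma \ref{LemmaAux2}(ii), and confirming that the involution $i\mapsto q-i$ interacts cleanly with the exceptional residue classes so that the symmetry argument survives the constant corrections.
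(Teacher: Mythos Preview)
Your proposal is correct and follows essentially the same route as the paper: compute $\rho_i$ from Theorem~\ref{main} as $\mathcal S_i-\mathcal S_{i-1}$ (plus, in case~(ii), the contribution from $2\sum_{|s|<\nu}\varphi_i(s)$), invoke Lemma~\ref{LemmaAux2}(i) and~(ii) to evaluate these differences, and then derive the duality $\rho_j=-\rho_i$ by substituting $j\bar q\equiv 1-i\bar q\pmod p$ and using $\text{rk}\,\hat H^A_s=\text{rk}\,\hat H^A_{-s}$. The only point on which you are more explicit than the paper is in attributing the rank symmetry to the conjugation map $J$; note, however, that your factor of $2$ in front of the $\sum_{|s|<\nu}$ difference (which is indeed what Theorem~\ref{main} gives) would produce a correction of $\pm 2$ rather than the $\pm 1$ appearing in \eqref{FormulaForRhoCaseii}---the paper's own proof silently drops this factor, so the discrepancy lies in the statement rather than in your argument.
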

%%%
%%%
The sums on the right-hand sides in \eqref{FormulaForRhoCasei} and \eqref{FormulaForRhoCaseii} are finite, as $\text{rk}\,\hat H^A_t = 1$ whenever $|t|\ge g(K)$. 
%%%
%%%
\begin{proof} 
{\em Case (i).} In case (i), according to Theorem \ref{main}, $\rho_i$ equals the difference $\mathcal S_i-\mathcal S_{i-1}$, and can thus be computed as 
$$\rho_i = \sum _{s\in \mathbb Z} \left( \varphi_{i}(s) - \varphi_{i-1}(s)\right) [\text{rk }\hat H^A_{s}-1] $$
Equation \eqref{FormulaForRhoCasei} now follows directly from Case (i) in Lemma \ref{LemmaAux2}. Next, pick integers $i,j$ with $j\equiv q-i \,\,(\text{mod } p)$. Then 
\begin{align*}
\rho_j & = \sum _{k\in \mathbb Z} \left( \text{rk } \hat H^A_{j\bar q +pk} - \text{rk } \hat H^A_{j\bar q -1 +pk}  \right), \cr  
& = \sum _{k\in \mathbb Z} \left( \text{rk } \hat H^A_{(q-i)\bar q +pk} - \text{rk } \hat H^A_{(q-i)\bar q -1 +pk}  \right), \cr  
& = \sum _{k\in \mathbb Z} \left( \text{rk } \hat H^A_{-i\bar q +1 +pk} - \text{rk } \hat H^A_{-i\bar q  +pk}  \right), \cr  
& = \sum _{k\in \mathbb Z} \left( \text{rk } \hat H^A_{i\bar q -1 +pk} - \text{rk } \hat H^A_{i\bar q  +pk}  \right), \cr  
& = -\rho_i. 
\end{align*}
%%%
%%%
\vskip2mm
{\em Case (ii).} In Case (ii) of the present lemma, Theorem \ref{main} implies that $\rho_i$ is given by 
\begin{equation} \label{AuxiliaryEquation1}
\rho _i = \sum _{|s|<\nu} \left(\varphi_i(s) - \varphi_{i-1}(s)  \right) +  \sum _{s\in \mathbb Z} \left( \varphi_{i}(s) - \varphi_{i-1}(s)\right) [\text{rk }\hat H^A_{s}-1].
\end{equation}
The second summand equals $\rho_i$ from Case (i), and is thus given by the right-hand side of \eqref{FormulaForRhoCasei}. Using Case (ii) in Lemma \ref{LemmaAux2} to rewrite the first summand on the right-hand side of \eqref{AuxiliaryEquation1}, equation \eqref{FormulaForRhoCaseii} follows. 

The equality $\rho_j = -\rho_i$ for $j\equiv q-i \, (\text{mod }p)$ follows exactly as in Case (i), with the observation that if $i\equiv \nu q\, (\text{mod }p)$, then $q-i \equiv q-\nu q \, (\text{mod }p) \equiv (-\mu +1)q \, (\text{mod }p)$.  
\end{proof}
%%%
%%%
\begin{definition}
Let $p,q$ be relatively prime integers with $q>0$. Define $\tilde q\in \{1,\dots, |p|-1\}$ to be the unique integer with $\tilde q \equiv q \,(\text{mod }p)$. For $i\in \{1,\dots, |p|-1\}-\{\tilde q\}$, define its {\em dual} $\tilde i\in \{1,\dots, |p|-1\}-\{\tilde q\}$ by the requirement $\tilde i \equiv q-i \, (\text{mod }p)$.  
\end{definition}
%%%
%%%
The relevance of the dual indices comes of course from Lemma \ref{LemmaAux2} by which $\rho\,_{\tilde i} = -\rho_i$, for $i\in \{1,\dots, |p|-1\}-\{\tilde q\}$. Remember that at the beginning of this section we assumed $|p|\ge 2g+1$. This condition guarantees that there is at most one nonzero term in the sums on the right-hand sides of \eqref{FormulaForRhoCasei} and \eqref{FormulaForRhoCaseii}, indeed many $\rho_i$ vanish. 
%%%
%%%
\begin{lemma} \label{LemmaAux4}
The set $\{\rho _i\, |\, i=1,\dots, |p|-1\}$ contains at most $2g-1$ nonzero terms. Of these, $2g-2$ are made up of $g-1$ pairs of opposite integers.   
\end{lemma}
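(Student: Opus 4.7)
The plan is to combine Lemma \ref{LemmaAux3} with the sparsity of the sequence $\text{rk}\,\hat H^A_s$ (which equals $1$ for $|s|\ge g$) and the standing assumption $|p|\ge 2g+1$ to pin down the set of indices $i$ for which $\rho_i$ can be nonzero, and then apply the $i\mapsto \tilde i$ duality of Lemma \ref{LemmaAux3} to organize the nonzero values into pairs of opposites.

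For the sparsity count, note that consecutive terms of the arithmetic progression $i\bar q+p\mathbb Z$ are at distance $|p|\ge 2g+1>2g-1$, so each such progression meets the support set $\{-g+1,\ldots,g-1\}$ of $(\text{rk}\,\hat H^A_s)-1$ in at most one point. Consequently each of the two sums in the ``difference part'' $D_i:=\sum_k\bigl((\text{rk}\,\hat H^A_{i\bar q+pk})-(\text{rk}\,\hat H^A_{i\bar q-1+pk})\bigr)$ contains at most one nonzero summand, and $D_i$ can be nonzero only if $i\bar q\pmod p$ lies in the residue set $U:=\{-g+1,\ldots,g\}\setminus\{0\}$, of cardinality $2g-1$. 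In Case~(ii) of Lemma \ref{LemmaAux3} there are additional $\pm 1$ contributions at the two indices $i_A,i_B$ satisfying $i_A\bar q\equiv -\nu+1$ and $i_B\bar q\equiv \nu\pmod p$; since $\hat v_g:\hat A_g\to\hat B$ is an isomorphism, the definition of $\nu$ forces $\nu\le g$, so both $i_A\bar q$ and $i_B\bar q$ already lie in $U$. Hence in either case the set of $i\in\{1,\ldots,|p|-1\}$ with $\rho_i\ne 0$ has cardinality at most $|U|=2g-1$.

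For the pairing, apply the duality $\rho_{\tilde i}=-\rho_i$ from Lemma \ref{LemmaAux3}, valid on $\{1,\ldots,|p|-1\}\setminus\{\tilde q\}$. The involution $i\mapsto \tilde i$ has at most one fixed point, determined by $2i\equiv q\pmod p$ (uniquely solvable when $p$ is odd, no solution when $p$ is even); at any such fixed point $\rho_i=-\rho_i=0$, so it contributes nothing. Thus the nonzero $\rho_i$'s with $i\ne \tilde q$ come in genuine pairs of opposite integers. Combining this with the cardinality bound $2g-1$ and noting that $\tilde q$ can account for at most one unpaired nonzero value, the remaining (at most) $2g-2$ nonzero values split into at most $g-1$ pairs of opposite integers. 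The main bookkeeping burden lies in Case~(ii): verifying that the $\pm 1$ adjustments at $i_A,i_B$ neither enlarge $U$ (ensured by $\nu\le g$) nor disrupt the duality (automatic, since $i_A$ and $i_B$ are themselves a dual pair as $\nu q\equiv q-(-\nu+1)q\pmod p$).
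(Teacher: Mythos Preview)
Your argument is correct and follows essentially the same route as the paper: both proofs reparametrize via the bijection $i\leftrightarrow i\bar q\pmod p$ (the paper writes it as $i\mapsto iq$), use the support bound $|s|<g$ together with $|p|\ge 2g+1$ to identify at most $2g-1$ potentially nonzero jumps, and invoke the duality $\rho_{\tilde i}=-\rho_i$ from Lemma~\ref{LemmaAux3} to pair all but one of them. Two minor remarks: when $\nu=1$ the congruence $i\equiv(-\nu+1)q\pmod p$ has no solution in $\{1,\dots,|p|-1\}$, so strictly speaking $i_A$ does not exist (your count is unaffected); and for the self-dual index you argue $\rho_i=-\rho_i=0$ directly, whereas the paper instead checks that the self-dual residue falls into the vanishing range $\{g+1,\dots,|p|-g\}$---both work.
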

%%%
%%%
\begin{proof}
We note that the map $i\mapsto iq \,(\text{mod } p)$ is a bijection of $\{1,\dots, |p|-1\}$, and so it suffices to prove the lemma by for the set $\{\rho_{iq} \, |\, i=1, \dots ,|p|-1\}$ instead of $\{\rho _i\, |\, i=1,\dots, |p|-1\}$ (where by $\rho_{iq}$ we mean $\rho_j$ for the unique $j\in \{1,\dots, |p|-1\}$ with $iq \equiv j \,(\text{mod }p)$). 

If either $\nu=0$ or $\nu>0$ and $(2\nu-1)q\le p$, then formula \eqref{FormulaForRhoCasei} for $\rho_{iq}$ becomes
$$\rho_{iq} = \sum _{k\in \mathbb Z} \left( \text{rk } \hat H^A_{i+pk} -  \text{rk } \hat H^A_{i-1+pk}  \right),$$ 
showing that $\rho_{iq} =0$ for $i=g+1, \dots, |p|-g$, while $\rho_{iq} = -\rho_{jq}$ with $j\equiv 1-i \,(\text{mod }p)$ and for $i=2,\dots, g$. When $p$ is odd there is one \lq\lq self-dual\rq\rq value of $i$, namely $i_0=\frac{|p|+1}{2}$. However, since $|p|\ge 2g+1$, the value $i_0$ lies in the set $\{g+1,\dots,|p|-g\}$. %We note the $\rho_q$ is left without a \lq\lq dual pair\rq\rq. 

If $\nu>0$ and  $p\le (2\nu-1)q$ then formula \eqref{FormulaForRhoCasei} becomes 
$$
\rho _{iq} = \left\{
\begin{array}{rl}
1 + \sum _{k\in \mathbb Z} \left( \text{rk }\hat H^A_{i +pk} \, -  \text{rk } \hat H^A_{i -1+pk}\right) & \quad ; \quad  i\equiv -\nu+1 \, (\text{mod } p), \cr 
-1 + \sum _{k\in \mathbb Z} \left( \text{rk }\hat H^A_{i +pk} \, -  \text{rk } \hat H^A_{i -1+pk}\right) & \quad ; \quad   i\equiv \nu  \, (\text{mod } p), \cr 
\sum _{k\in \mathbb Z} \left( \text{rk }\hat H^A_{i\bar q +pk} \, -  \text{rk } \hat H^A_{i\bar q -1+pk}\right) & \quad ; \quad   \text{otherwise},
\end{array}
\right.
$$
showing again that $\rho_i=0$ for $i=g+1,\dots, |p|-g$, as in the previous case. The equality $\rho_{iq} = -\rho_{jq}$ continues to hold when $j\equiv 1-i \,(\text{mod }p)$ and for $i=2,\dots, g$ in the present case as well. The only observation we need to make here is that the indices $i=|p|-\nu+1$ and $j=\nu$ satisfy the congruence from the previous sentence. 
\end{proof}                                                                                                                 
%%%
%%%
The content of the next lemma is the observation that dual pairs of indices $i, \tilde i$ occur in a \lq\lq nested\rq\rq manner. 
%%%
%%%
\begin{lemma} \label{LemmaAux5}
Let $i,j\in \{1,\dots,|p|-1\}-\{\tilde q\}$ be such that $i<j<\tilde i$. Then $i<\tilde j <\tilde i$. 
\end{lemma}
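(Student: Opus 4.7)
My plan is to exploit the explicit arithmetic form of the duality. By definition $\tilde x \equiv q-x \pmod{|p|}$, so $i+\tilde i \equiv \tilde q \pmod{|p|}$. Since $i,\tilde i \in \{1,\dots,|p|-1\}$, the sum $i+\tilde i$ lies in $\{2,\dots,2|p|-2\}$, and must therefore equal either $\tilde q$ or $\tilde q+|p|$.

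The first step is to identify a natural partition of the index set. Set $L=\{1,\dots,\tilde q-1\}$ and $R=\{\tilde q+1,\dots,|p|-1\}$. If $i\in L$ then $\tilde i = \tilde q - i$, which still lies in $L$; if $i\in R$ then $\tilde i = \tilde q + |p| - i$, which still lies in $R$. Hence $i$ and $\tilde i$ always belong to the same half, and on each half the duality acts as the affine map $x\mapsto C-x$ for a fixed constant $C$ (namely $\tilde q$ on $L$ and $\tilde q+|p|$ on $R$). This map is strictly order-reversing.

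With this in hand, the conclusion is immediate. Assume $i<j<\tilde i$; the pair $\{i,\tilde i\}$ lies entirely in one half, say $L$ (the case of $R$ is handled verbatim). Then $j$ is squeezed between two elements of $L$, so $j\in L$ as well, and in particular $j\neq \tilde q$ holds automatically. Applying the order-reversing involution $x\mapsto \tilde q-x$ to the chain $i<j<\tilde i$ flips it to $\tilde i > \tilde j > i$, which rearranges to the desired $i<\tilde j<\tilde i$.

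The only step that requires a moment's care is the dichotomy that $i$ and $\tilde i$ always live in the same half, but this is a one-line residue computation. Once it is in place, the remainder is simply the observation that an order-reversing affine involution swaps a two-sided interval about a pair of dual points, so I do not anticipate any genuine obstacle.
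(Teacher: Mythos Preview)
Your proposal is correct and follows essentially the same approach as the paper: both arguments split into the cases $i<\tilde q$ and $i>\tilde q$ (your sets $L$ and $R$), observe that on each piece the duality is the explicit order-reversing affine map $x\mapsto \tilde q-x$ respectively $x\mapsto \tilde q+|p|-x$, and deduce the nesting directly. Your write-up is in fact somewhat cleaner than the paper's, since you make explicit the key point that $i$ and $\tilde i$ always lie in the same half.
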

%%%
%%%
\begin{proof}
Consider first the case of $1\le i<\tilde q$. Then $\tilde i = q-i$ and if $j$ lies between $i$ and $\tilde i$, the so does $\tilde j$ as $\tilde j = q-j$. If $i>\tilde q$ then $\tilde i = q-i+kp$ for some integer $k$, independent of $i$. If $j$ lies between $i$ and $\tilde i$, and since $\tilde j = q-j+kp$, then $\tilde j$ also lies between $i$ and $\tilde i$. 
\end{proof}
%%%
%%%
\vskip2mm
We are now in a position to conclude the proof of Theorem \ref{Estimate}. For convenience set $\rho_0 =\text{rk }\widehat{HF}(S^3_r(K),[0])$, and recall that $\rho _i$ for $i\in \{1,\dots, |p|-1\}$ is defined by \eqref{DefinitionOfRho}. Thus 
$$\text{rk } \widehat{HF}(S^3_r(K),[i]) = \rho_0+ \rho_1+\dots + \rho_i. $$
As there are at most $2g-1$ nonzero values among $\rho_1,\dots,\rho_{|p|-1}$, and of these $2g-2$ appear as pairs of opposite integers (Lemma \ref{LemmaAux4}), with nested distribution (Lemma \ref{LemmaAux5}), the number of different values of the sums $\sum _{j=0}^i \rho_j$ for $i=0,\dots ,|p|-1$ it at most $g+1$, as claimed in Theorem \ref{Estimate}. \hfill \qed
%%%%%%%%%%%%%%%%%%%%%%%%%%%%%%%%%%%%%%%%%%%%%%%%%%%%%%%%%%%%%%%%%%%%%%%%%%%%%%%%%%%%%%%%%%%%%%%%%%%%%%%%%%%%%%%%%%%%%%%%%%%%%%%%%%%%%%%
\subsection{Proof of Theorem \ref{PertainingToCablingConjecture}}
%%%
%%%
The paragraphs of Section \ref{SectionOnCablingConjecture} preceding Theorem \ref{PertainingToCablingConjecture} explain that any knot $K$ for which the result of integral Dehn surgery $S^3_p(K)$ is reducible, leads to the decomposition $S^3_p(K) \cong Y\#L(a,b)$ for some relatively prime, non-zero integers $a, b$, with $a >1$. Clearly $a$ must divide $p$, and we note that $Y$ is a rational homology sphere with $H_1(Y;\mathbb Z) \cong \mathbb Z_{|p|/a}$. Any spin$^c$-structure $\s$ on $S^3_p(K)$ may be written as $s=\s_1\# \s_2$ with $\s_1\in Spin^c(Y)$ and $\s_2\in Spin^c(L(a,b))$. Theorem 1.5 in \cite{Peter2} implies 
$$\text{rk } \widehat{HF}(S^3_p(K),\s) = \left( \text{rk } \widehat{HF}(Y,\s_1)\right) \cdot \left( \text{rk } \widehat{HF}(L(a,b),\s_2)\right),$$
leading to the equality 
$$\text{rk } \widehat{HF}(S^3_p(K),\s_1\#\s_2) =  \text{rk } \widehat{HF}(Y,\s_1), \quad \text{ for all } \s_2 \in Spin^c(L(a,b)).$$
As $|Spin^c(L(a,b))|=a$, it follows that each of the $|p|$ terms $\text{rk } \widehat{HF}(S^3_p(K),\s)$ occurs with a multiplicity divisible by $a$. Adding or subtracting a constant from all the terms $\text{rk } \widehat{HF}(S^3_p(K),\s)$ does not affect this multiplicity property. 

As in Corollary \ref{CorollaryAboutIntegerSurgeries}, write $2\nu-1=np+r$ with $n\in \mathbb Z$ and $r\in \{0\dots, |p|-1\}$, and let $\mathcal I_{[i]}$ be the set $\mathcal I_{[i]} = \{ s\in \mathbb Z \, |\, |s|<g \text{ and } s\equiv i \, (\text{mod } p )\}$. Furthermore, let $\mathcal J\subset \mathbb Z/p\mathbb Z$ be the set of indices $\mathcal J = \{ [-\nu+j]\, |\, j=1,\dots, r\}$. If $r=0$ then $\mathcal J=\emptyset$. 

If $\nu>0$ and $2\nu -1\le p$ then Corollary \ref{CorollaryAboutIntegerSurgeries} gives 
\begin{equation} \label{AuxEquation1}
\sum _{s\in \mathcal J_{[i]}} (\text{rk } \hat H^A_s - 1) =  \text{rk }\widehat{HF}(S^3_p(K),[i]) - 1. 
\end{equation}
If $\nu = 0$ Corollary \ref{CorollaryAboutIntegerSurgeries} implies that 
\begin{equation} \label{AuxEquation2}
\sum _{s\in \mathcal J_{[i]}} (\text{rk } \hat H^A_s - 1) = \left\{
\begin{array}{cl}
 \text{rk }\widehat{HF}(S^3_p(K),[i]) + 1 & ; \quad \begin{array}[t]{l} p< 0, \text{rk }(\hat v_0+\hat h_0)=2, \text{ and }\cr
\lfloor \frac{i+ps}{q}\rfloor = 0 \text{ for some } s\in \mathbb Z, \end{array} \cr
 \text{rk }\widehat{HF}(S^3_p(K),[i]) - 1 & ; \quad \text{otherwise.}
\end{array}
\right.
\end{equation}
In the remaining case of $\nu>0$ and $p\le 2\nu-1$, Corollary \ref{CorollaryAboutIntegerSurgeries} yields 
\begin{equation} \label{AuxEquation3}
 \text{rk }\widehat{HF}(S^3_p(K),[i]) - (2n-1)   = \left\{
\begin{array}{cl}
2+ \sum _{s\in \mathcal J_{[i]}} (\text{rk } \hat H^A_s - 1)  & ; \quad [i]\in \mathcal J,\cr
\sum _{s\in \mathcal J_{[i]}} (\text{rk } \hat H^A_s - 1)  & ; \quad [i]\notin \mathcal J,
\end{array}
\right.
\end{equation}
where $\mathcal J\subset \mathbb Z/p\mathbb Z$ is given as $\mathcal J = \{ [-\nu+1+j]\, |\, j=0,\dots, r-1\}$ when $r>0$, and $\mathcal J=\emptyset$ if $r=0$. Equations \eqref{AuxEquation1}, \eqref{AuxEquation2}, \eqref{AuxEquation3} prove Theorem \ref{PertainingToCablingConjecture}. \hfill \qed

\end{document}